\documentclass[10pt]{amsart}

\title{Projections of sets with optimal oracles onto $k$-planes}

\author{Jacob B. Fiedler}  
\address{Department of Mathematics, University of Wisconsin-Madison, Wisconsin 53715}

\email{jbfiedler2@wisc.edu}
\thanks{The first author was supported in part by NSF DMS-2037851 and NSF DMS-2246906.}

\author{Zhifan Jing}

\address{Department of Computer Science, University of Wisconsin-Madison, Wisconsin 53715}

\email{zjing24@wisc.edu}

\subjclass[2020]{28A78, 28A80, 68Q30}	

\usepackage{enumitem}
\usepackage{amsmath}
\usepackage{amssymb}
\usepackage{mdframed}
\usepackage{amsthm}
\usepackage{mathtools}
\usepackage{enumitem}
\usepackage{placeins}
\usepackage{url}
\usepackage{hyperref}

\newtheorem{thm}{Theorem}
\newtheorem{obs}[thm]{Observation}
\newtheorem{lem}[thm]{Lemma}
\newtheorem{prop}[thm]{Proposition}

\newtheorem{defn}[thm]{Definition}

\newtheorem*{T1}{Theorem~\ref{thm:MarstrandsProjection}}
\newtheorem*{T2}{Theorem~\ref{thm:exceptionalSetEstimate}}

\theoremstyle{remark}

\newcommand{\R}{\mathbb{R}}

\newcommand{\N}{\mathbb{N}}
\newcommand{\Q}{\mathbb{Q}}
\newcommand{\G}{\mathcal{G}}

\newcommand{\ve}{\varepsilon}

\begin{document}
\maketitle

\begin{abstract}
    We prove a Kaufman-type exceptional set estimate for sets in $\mathbb{R}^n$ that have optimal oracles, a class of sets that strictly contains the analytic sets and sets with equal Hausdorff and packing dimension. As a consequence, we generalize the conditions under which Marstrand's projection theorem for $k$-planes is known to hold. Our proofs use effective methods, especially Kolmogorov complexity, and along the way, we introduce several new tools for studying the information content of elements of the Grassmannian. 
\end{abstract}

\section{Introduction}

Let $p_VF$ denote the projection of a set $F$ onto a subspace $V$ in the Grassmannian $\mathcal{G}(n, k)$. Marstrand famously proved that for any analytic set \(F \subseteq \mathbb{R}^2\),
\[
\dim_H(p_eF)=\min\{\dim_H(F),1\}
\]
for almost every \(e\in S^1\) \cite{Marstrand1954Projection}. Mattila later generalized this theorem to \(\mathbb{R}^n\), showing that for any analytic set \(F \subseteq \mathbb{R}^n\),
\[
\dim_H(p_VF)=\min\{\dim_H(F),k\}
\]
for almost every \(V\in \G(n,k)\) \cite{mattila1975exceptionalset}. These results formalize the intuition that the projection of a (suitably well-behaved) set should be large in most directions, and they helped initiate an extensive study of fractal projections that has come to span numerous fields of math \cite{FalFraJin15, falconer2026seventyyearsfractalprojections}.

Our first theorem extends the above result to a broader class of sets, namely sets with optimal oracles.
\begin{thm}\label{thm:MarstrandsProjection}
    Let $F \subseteq \R^n$ be a set with optimal oracles. Then for almost every $V \in \G(n,k)$, \begin{equation*}
        \dim_H(p_VF)= \min\{\dim_H (F), k\}
    \end{equation*}
\end{thm}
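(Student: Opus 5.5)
The upper bound $\dim_H(p_VF)\le\min\{\dim_H(F),k\}$ holds for every $V$: $p_V$ is Lipschitz and $p_VF$ lies in a $k$-dimensional space. So the content of Theorem~\ref{thm:MarstrandsProjection} is the lower bound. I would deduce it from the Kaufman-type exceptional set estimate announced in the abstract. For sets with optimal oracles, that estimate should say that for $0\le s\le\min\{\dim_H(F),k\}$,
\[
\dim_H\{V\in\G(n,k):\dim_H(p_VF)<s\}\le k(n-k)-(k-s).
\]
Since $\dim\G(n,k)=k(n-k)$, for every $s<\min\{\dim_H F,k\}$ this exceptional set has Hausdorff dimension strictly less than $k(n-k)$. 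It is therefore null for the natural (orthogonally invariant) measure on $\G(n,k)$. Taking $s_j\uparrow\min\{\dim_H F,k\}$ and a countable union of null sets gives the theorem.

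The real work is the exceptional set estimate, which I would prove by the point-to-set principle. Let $t=\min\{\dim_H F,k\}$ and $s<t$, and let $E_s$ be the exceptional set. Fix an oracle $A$ with $\dim_H(E_s)=\sup_{V\in E_s}\dim^A(V)$, using the paper's notion of Kolmogorov complexity for points of $\G(n,k)$. Choose $V\in E_s$ with $\dim^A(V)$ close to $\dim_H E_s$. By the point-to-set principle applied to $p_VF$, there is an oracle $B$ (join it with $A$) such that $\dim^{A,B}(p_Vx)\le\dim_H(p_VF)<s$ for all $x\in F$.

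Next I would invoke the optimal oracle hypothesis. For this oracle there is a point $x\in F$ with $\dim^{A,B,V}(x)\ge\dim_H(F)-\epsilon$. Optimality should also give that $K_r^{A,B}(x)$ is essentially maximal at every precision, not just in the liminf sense, and relative to the extra information coming from $V$; this is what lets us relativize freely.

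The core step is a projection lemma for $k$-planes:
\[
K_r^{A,B}(p_Vx)\gtrsim \min\{K_r^{A,B}(x),\,kr\}-\big(k(n-k)r-K_r^{A,B}(V)\big)-o(r).
\]
Heuristically, a competing point $y$ with $p_Vy\approx p_Vx$ at precision $r$ and $|x-y|\approx 2^{-t'}$ forces $V$ to be determined to precision $\approx r-t'$ given $x,y$. This yields a lower bound on $K(x)$ via an enumeration argument, as in Lutz–Stull for lines. Here one needs the paper's new Grassmannian tools: a description of $\G(n,k)$ in local coordinates as $k\times(n-k)$ matrices, a bound on how many bits of $V$ are determined by two nearby points and their common projection, and a symmetry of information adapted to this setting. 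Combining, $s>\dim(p_Vx)\ge t-(k(n-k)-\dim^A V)$, which gives $\dim^A V\le k(n-k)-(t-s)$. This is a Kaufman-type bound, and sufficient for the measure-zero conclusion.

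I expect the main obstacle to be this projection lemma on $\G(n,k)$. In the line case the geometry of the intersection of two projections is one-dimensional and clean. For $k$-planes the set of $V$ with $p_V(x-y)$ small is a neighborhood of a codimension-$k$ sub-Grassmannian, and the bits of $V$ that $(x,y)$ reveal must be counted carefully. I would first handle the generic-precision case with a "good oracle" reduction that lowers the complexity of $x$ at appropriate scales, as in Stull's line projection theorem. That reduction is where optimality of oracles is essential.
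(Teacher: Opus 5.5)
Your proposal is correct and follows the paper's route. The paper deduces Theorem~\ref{thm:MarstrandsProjection} from the Kaufman-type bound of Theorem~\ref{thm:exceptionalSetEstimate} by taking $s=\min\{\dim_H F,k\}-1/m$ and a countable union of null sets of Hausdorff dimension $<k(n-k)$; it proves that bound with the ingredients you list, namely the optimal oracle, the oracle $D$ lowering the complexity of $z$, the enumeration lemma, and a geometric lemma counting the bits of $V$ revealed through the hyperplane $(z-w)^\perp$ together with Lemma~\ref{lem:planesInsidePlanes}. There are two small bookkeeping points. First, this machinery gives the stronger bound $\dim_H(p_VF)\ge\min\{\dim_H F,\dim^A(V)-k(n-1-k)\}$ of Theorem~\ref{thm:1}, which is what yields the Kaufman estimate you announce; your weaker projection lemma only gives $k(n-k)-(\min\{\dim_H F,k\}-s)$, which is still enough here. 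Second, $V$ should be chosen with large dimension relative to the optimal oracle of $F$, as the paper does, rather than relative to a Hausdorff oracle for $E_s$.
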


We defer a formal definition of sets with optimal oracles to Section~2, but note that this class was introduced by Stull in the context of projection theorems \cite{Stull22a} and contains (among other sets) the analytic sets, sets with equal Hausdorff and packing dimension, sets with positive finite measure according to certain metric outer measures, as well as \emph{every} set assuming the axiom of determinacy (c.f. \cite{CroFishJack22}). To give a brief indication of the history of this notion, N. Lutz and Stull proved the $k=1$ case of Marstrand's projection theorems for sets with equal Hausdorff and packing dimension using algorithmic methods \cite{LutStu18Projections}. Orponen then gave a combinatorial proof for general $k$ \cite{Orponen2021Projections}. Finally, Stull introduced sets with optimal oracles and proved Marstrand's projection theorem in $\mathbb{R}^2$ for them.

Theorem \ref{thm:MarstrandsProjection} is a consequence of a stronger ``exceptional set'' estimate. Marstrand's projection theorem says that the directions in which the projection of a set fails to have maximal size are a set of measure zero. It is reasonable to ask whether the set of directions such that a set's projections have \emph{much} less than maximal size is even smaller. Formally, for $F \subseteq \R^n$, define the exceptional set \begin{equation*}
     E_s(F):=\{V\in\mathcal{G}(n, k): \dim_H(p_V F)<s\}.
\end{equation*}

It was first proved in $\R^2$ by Kaufman \cite{Kaufman1968ExceptionalSetEstimate}  and then extended to $\R^n$ by Mattila \cite{mattila1975exceptionalset} that for any analytic $F\subset \R^n$, \begin{equation*}
    \dim_H(E_s(F)) \leq k(n-k) + s - k.
\end{equation*}
In this paper, we extend this bound to sets with optimal oracles.
\begin{thm}\label{thm:exceptionalSetEstimate}
      Let $F\subseteq\mathbb{R}^n$ with optimal oracles and $k<n$ be given. If $F$ has Hausdorff dimension $a \geq s$, then
    \begin{equation*}
        \dim_H(E_s(F))\leq k(n-k) + s - k.
    \end{equation*}
\end{thm}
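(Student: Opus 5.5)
We argue by the point-to-set principle of J. Lutz and N. Lutz, which turns the Hausdorff-dimension statement into a pointwise statement about Kolmogorov complexity. Fix $t > k(n-k)+s-k$; it suffices to show $\dim_H(E_s(F)) \le t$. Suppose not. Then there is a set $E' \subseteq E_s(F)$ with $\dim_H(E') > t$, and by Frostman's lemma $E'$ carries a measure $\mu$ with $\mu(B(V,r)) \lesssim r^{t}$. Let $A$ be an oracle relative to which $\mu$ is computable and which also witnesses the optimal-oracle property of $F$ (this property is defined in Section~2). For each $V \in E'$ we have $\dim_H(p_VF) < s$, so the point-to-set principle gives an oracle $B_V$ with $\dim^{A,B_V}(p_Vx) < s$ for every $x\in F$.

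The first step is to choose a good direction $V \in E'$. Since $\dim_H(E') > t$, we can pick $V \in E'$ with $\dim^{A}(V) > t$. We then want $V$ to remain of high dimension relative to the extra information. To arrange this, we use the optimal-oracle property to choose $x\in F$ that is essentially optimal for $A$ and $B_V$. Concretely, we want $\dim^{A,B_V}(x) \ge a-\varepsilon$, and more strongly, at every precision $r$, $K^{A,B_V}_r(x) \ge K^{A}_r(x) - \varepsilon r$ with $\dim^A(x) \ge a - \varepsilon$. We take $x$ random enough relative to $V$ as well, so that $K^{A,V}_r(x) \approx K^A_r(x)$ at all scales. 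Optimal oracles give exactly this: the complexity of $x$ cannot be lowered by $B_V$ at any precision, beyond the amount forced by $\dim_H(F)$.

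The core step is a projection lemma for $k$-planes in $\mathbb{R}^n$. It should say that if $V \in \G(n,k)$ has effective dimension $\dim^A(V) > k(n-k)+s-k$ and $x$ satisfies $K^{A,V}_r(x) \ge \min\{a, s\}\,r - o(r)$ together with the scalewise lower bound, then $K^{A,V}_r(p_V x) \ge s r - o(r)$ for infinitely many (or all large) $r$. The proof is the standard ``enumeration'' argument. Suppose $K_r(p_Vx)$ were small. Then $x$ can be recovered, up to precision $r$, from $p_Vx$ together with the $(n-k)$-dimensional fibre data, unless some other pair $(y,W)$ with $p_Wy \approx p_Vx$ exists. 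Such a competing $W$ at distance $2^{-m}$ from $V$ forces $|x-y| \gtrsim 2^{-m}$-type relations, namely $|p_{W}(x-y)|\lesssim 2^{-r}$. This yields $K_{r}(V)$ bounds of the form $K_m(V)\le k(n-k)\,m$ plus losses tied to how far $x-y$ is aligned with the orthogonal complement. We need new Grassmannian tools here: a covering and complexity bound for the set of $W$ with $p_W(z)\in B(0,2^{-r})$ for a given $z$, which should have complexity roughly $(k(n-k)-k)(r-m)$ plus lower-order terms. Combining these gives $K_r(p_Vx) \gtrsim \min\{K_r(x), \dim(V)-k(n-k)+k\}\,r$, which exceeds $s r$.

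Finally, we add $B_V$ and use the optimality of $x$ to transfer the lower bound to $K^{A,B_V}_r(p_Vx) \ge (s-\varepsilon)r$ infinitely often. This gives $\dim^{A,B_V}(p_Vx)\ge s-\varepsilon$, contradicting the choice of $B_V$ once $\varepsilon$ is small. I expect the main obstacle to be the Grassmannian projection lemma, in particular the counting bound for nearby planes $W$ that can send $x-y$ near zero, together with making the precision-splitting (the ``$m$ versus $r$'' bookkeeping) work uniformly in $n$ and $k$. I would attempt it by coordinatizing $\G(n,k)$ locally by $k\times(n-k)$ matrices and reducing the condition $p_W(z)\approx 0$ to $k$ linear constraints on those matrix entries.
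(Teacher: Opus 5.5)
Your outer argument matches the paper's. The paper gets the statement in a few lines from Theorem~\ref{thm:1}: choose $V\in E_s(F)$ with $\dim^A(V)>k(n-k)+s-k$ by the point-to-set principle, take $x\in F$ optimal relative to $(B_V,V)$, and run an enumeration argument. Its geometric input is that a $k$-plane inside a hyperplane costs only $k(n-1-k)$ bits per unit of precision. You should drop the Frostman measure: it is never used, and Frostman's lemma needs $E_s(F)$ to be Borel or analytic, which nothing here guarantees.

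\textbf{Missing cap on the complexity of $x$.} Recovering $x$ from $p_Vx$ (with oracle $V$) by searching short programs needs two things (Lemma~\ref{lem:enumeration}). It needs an \emph{upper} bound $K_r(x)\le(\eta+\ve)r$, and it needs $K_r(w)\ge(\eta-\ve)r+\delta(r-t)$ for every fibre point $w$ with $\|w-x\|=2^{-t}$. The geometric lemma only gives $K_r(w)\gtrsim K_t(x)+\beta(r-t)$, where $\beta=\dim^A(V)-k(n-1-k)$. If $x$ gains complexity faster than rate $\beta$ on some $[t,r]$, competitors cannot be excluded. Then the search may return a point far from $x$, and you lose $n(r-t)$. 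This happens whenever $\dim^A(x)>\beta$, already with $t=O(1)$, and that is entirely possible when $a>s$, since $\beta$ may be barely above $s$. Your hypothesis on $x$ is only a lower bound, so nothing in your sketch produces the $\min$ you write down. The missing device is the oracle $D=D(r,x,\eta)$ of Lemma~\ref{lem:oracleD}, with $\eta<\min\{\dim^A(x),\beta\}$. Relative to $(A,D)$ one has $K_t(x)=\min\{\eta r,K^A_t(x)\}+O(\log r)$ for $t\le r$, so $x$ grows at rate at most $\eta$ on every $[t,r]$. At the same time $K_{r-t,r}(V\mid x)$ is unchanged. This is exactly what yields condition (ii) with $\delta=\beta-\eta>0$.

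\textbf{The transfer to $B_V$.} A lower bound on $K^{A,V}_r(p_Vx)$ says nothing about $K^{A,B_V}_r(p_Vx)$. Optimality controls the complexity of $x$, not of $p_Vx$. You also cannot rerun your lemma relative to $(A,B_V)$, because $B_V$ may compute $V$. What works is the relative form of the enumeration conclusion: $K^{C,V}_r(p_Vx)\ge K^{C,V}_r(x)-\frac{2n\ve}{\delta}r-O(\log r)$ for every oracle $C$ extending $(A,D)$, whose hypotheses involve only $(A,D)$-complexities. Take $C=(A,B_V,D)$. Optimality of $x$ plus Lemma~\ref{lem:oracleD2} then gives $K^{A,B_V,D,V}_r(x)\ge K^{A,D}_r(x)-\ve r-O(\log r)\ge(\eta-\ve)r-O(\log r)$. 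This must hold for all large $r$, not just infinitely often, since $\dim$ is a $\liminf$. An infinitely-often bound only controls effective packing dimension.

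\textbf{The competitors are points, not planes.} With $V$ available as an oracle there are no competing planes $W$ to enumerate; the competitors are fibre points $w$. The Grassmannian input is that $w$ at precision $r$, given $x$, determines the hyperplane $(w-x)^\perp\supseteq V$ at precision $r-t$. Inside that hyperplane $V$ costs at most $k(n-1-k)(r-t)$ bits (Lemmas~\ref{lem:geometric} and~\ref{lem:planesInsidePlanes}). Your count $k(n-k)-k$ is the right one, but this is how it enters.
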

Note that, in the planar case, the same estimate for sets with optimal oracles was established by the first author and Stull \cite{FieStu24Projections}.

For analytic sets, this exceptional set estimate is not state of the art. As a consequence of their sharp bounds on Furstenberg sets in the plane, Ren and Wang established sharp exceptional set estimates for projections of analytic sets in $\mathbb{R}^2$ \cite{ren2023furstenberg} (see also \cite{CsornStull2025A}). In higher dimensions, Falconer established a bound complementing Mattila's Kaufman-type result \cite{Falconer82Projections}. Gan proved additional bounds which are sharp for some parameters \cite{gan2024exceptionalsets}. Recently, Cholak-Cs\"ornyei-Lutz-Lutz-Mayordomo-Stull improved Gan's bounds, in particular resolving the problem for projections of analytic sets onto lines and hyperplanes \cite{CholakCsorn2025Bourgain}.

We briefly comment on the proof of the main theorem. We employ ``effective'' methods, which establish classical results in geometric measure theory by examining the information content of points in sets. Our classical projection theorem is a consequence of an effective theorem which states that, under the right conditions, projections of high complexity points onto high complexity $k$-planes have high complexity themselves. Overall, we argue similarly to \cite{LutStu18Projections}, but we require additional geometric tools to handle $k$-planes, as well as generalizations of existing algorithmic results. The first author introduced several of these tools along with a notion of Kolmogorov complexity on the Grassmannian in the context of higher-dimensional Furstenberg-type sets \cite{fiedler2025extensionsUnions}. We will provide a brief overview of algorithmic methods in $\mathbb{R}^n$ and the Grassmannian, prove several additional results in the Grassmannian (that may be of independent interest), establish two important lemmas, and then apply these lemmas in the proof of our main theorem. 

\section{Preliminaries}

\subsection{Effective dimension}
 We provide a brief overview of Kolmogorov complexity. For further details on the Kolmogorov complexity of strings, we refer readers to \cite{downey2010, li2008introduction}, and for further details on complexity at precision $r$, we recommend \cite{lutz2018algorithmic, lutz2020bounding}.
 
 Let $\sigma$ and $\tau$ be two finite strings. Let $B$ be any oracle. We define the Kolmogorov complexity as follows
\begin{equation*}
    K^B(\sigma \mid \tau) = \min_{\pi \in \{0,1\}^*} \{\ell(\pi) : U^B(\pi,\tau) = \sigma\}.
\end{equation*}
where $U$ is some fixed (prefix-free) universal oracle Turing Machine. The complexity is just the length of the shortest input to \(U\) that produces \(\sigma\). Since Turing Machine can only process finite data objects, for most reals with infinite digits, we can only work with finite approximations. Fixing some standard encoding of the rational vectors in $\mathbb{R}^n$ as binary strings gives a notion of Kolmogorov complexity for rational vectors. This leads to the following definition of Kolmogorov complexity at precision $r\in \N$ of $x \in \R^n$: \begin{equation*}
    K_r^B(x) = \min \{K^B(p): p \in B_{2^{-r}}(x) \cap \Q^n\}.
\end{equation*}
The conditional Kolmogorov complexity of $x\in \R^n$ at precision $r$ given $y \in \R^m$ at precision $s\in \N$ is \begin{equation*}
    K^B_{r,s}(x\mid y) = \max\{\min\{K^B_r(p\mid q): p \in B_{2^{-r}}(x)\cap \Q^n\}: q\in B_{2^{-r}}(y)\cap\Q^n\}.
\end{equation*}

With oracle access, a Turing machine can query information at any desired precision, whereas under conditional access the precision of the information provided is fixed. So clearly oracle access is always no weaker than conditional access. Furthermore, granting access to additional oracles cannot increase complexity by more than a trivial amount, since the machine can always choose not to use them. The following lemma formalizes these two simple observations.
\begin{lem}\label{lem:conditionalAndOracle}
    Let $x,y \in \R^n$, $r,s\in \N$, and $A,B \subseteq \N$. We always have \begin{equation*}
        K_{r,s}(x \mid y) \geq K_r^y(x) - O(\log (r+s))
    \end{equation*}
    and \begin{equation*}
        K_r^{A}(x) \geq  K_r^{A,B}(x) - O(\log r).
    \end{equation*}
\end{lem}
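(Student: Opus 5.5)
Both inequalities follow by simulation. In each case we turn a program witnessing the right-hand complexity into a program witnessing the left-hand one, at a cost of $O(\log)$ extra bits for self-delimiting encodings of the precision parameters. We also use the fixed universal machine $U$, so every simulation adds only a constant.

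For the first inequality, write $m = K_{r,s}(x\mid y)$. By definition, for every rational $q \in B_{2^{-s}}(y)$ there are a rational $p\in B_{2^{-r}}(x)$ and a program $\pi_q$ of length at most $m$ with $U(\pi_q, q) = p$.
\begin{itemize}
\item Build an oracle machine $M$ with oracle $y$ and input $\langle r, s, \pi\rangle$, where $r$ and $s$ are encoded self-delimitingly in $O(\log(r+s))$ bits.
\item $M$ queries the oracle for a rational $q$ with $|q-y|<2^{-s}$, which an oracle for $y$ provides (say via its first $s+O(1)$ bits of each coordinate).
\item $M$ then simulates $U(\pi, q)$ and outputs the result.
\end{itemize}
Taking $\pi = \pi_q$ for this particular $q$, which is determined by $s$ and $y$, the output $p$ lies in $B_{2^{-r}}(x)$. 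Hence $K^y_r(x) \le m + O(\log(r+s))$ after converting $M$ to $U^y$ with constant overhead.

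The one point needing care is the maximum over $q$ in the definition of $K_{r,s}$. It is harmless, because the bound for the worst $q$ bounds the one the machine actually obtains. The second detail is matching the conditioning precision in the paper's definition, which is written with radius $2^{-r}$ in the ball for $q$. If read literally, the machine queries $y$ to precision $r$ instead of $s$, and both $r$ and $s$ still cost only $O(\log(r+s))$ bits. Either way the argument is identical.

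For the second inequality, we must show that adding oracle $B$ does not increase complexity by more than $O(\log r)$. Let $\pi$ be a shortest $U^A$-program outputting a rational $p\in B_{2^{-r}}(x)$, so $\ell(\pi) = K^A_r(x)$. An oracle machine with access to the joined oracle $(A,B)$ simulates $U^A$ on $\pi$. It answers each $A$-query by querying the $A$-part of the join and never touches $B$. This yields $K^{A,B}(p) \le K^A(p) + O(1)$, hence $K^{A,B}_r(x) \le K^A_r(x) + O(1)$. That is even better than the claimed $O(\log r)$ slack, and the slack simply absorbs any encoding conventions for $r$.

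I expect no real obstacle. The only step needing attention is the bookkeeping in the first inequality: the oracle machine must produce the same approximation $q$ of $y$ that the conditional definition quantifies over, and the precision parameters must be encoded in a prefix-free way so that $U$ remains prefix-free. That encoding is exactly the source of the $O(\log(r+s))$ term.
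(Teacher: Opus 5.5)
Your proposal is correct and takes essentially the same approach as the paper. The paper gives no formal proof, only the two remarks preceding the lemma: an oracle for $y$ can supply the fixed-precision approximation $q$ that conditional access provides, and an extra oracle $B$ can simply be ignored. Your simulation arguments formalize both remarks faithfully, including the point that the maximum over $q$ bounds the particular $q$ your machine computes, and the self-delimiting encoding of the precision that produces the $O(\log(r+s))$ term.
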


Lutz and Stull proved that the symmetry of information holds for Kolmogorov complexity in Euclidean spaces \cite{lutz2020bounding}. The intuition is that computing \(x\) and \(y\) together is equivalent to first computing \(x\) and then computing \(y\) conditional on \(x\).
\begin{lem}
    For any $x\in \R^n, \ y\in \R^m$ and  $r,s\in \N$ with $r \geq s$, \begin{equation*}
    |K^B_{r,s}(x\mid y) + K^B_s(y) - K^B_{r,s}(x,y)| \leq O(\log r) + O(\log\log |y|).
    \end{equation*}
\end{lem}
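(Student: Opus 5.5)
The approach is to reduce the statement to the classical symmetry of information for finite strings, $|K(\sigma,\tau) - K(\tau) - K(\sigma\mid \tau, K(\tau))| \le O(1)$, equivalently $K(\sigma,\tau) = K(\tau) + K(\sigma\mid\tau) \pm O(\log K(\sigma,\tau))$, relativized to the oracle $B$. The work is in passing between real points at precisions $r,s$ and rational witnesses. First I fix $q^*\in B_{2^{-s}}(y)\cap\Q^m$ attaining $K^B_s(y)$. Then I would establish both inequalities separately.

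\emph{Upper bound.} I want $K^B_{r,s}(x,y)\le K^B_{r,s}(x\mid y)+K^B_s(y)+O(\log r)+O(\log\log|y|)$. I read $K^B_{r,s}(x,y)$ as the complexity of the pair at precision $r$ in $x$ and $s$ in $y$. Take the minimizing program for $q^*$. By definition of $K^B_{r,s}(x\mid y)$ as a max over $q$, there is $p\in B_{2^{-r}}(x)$ with $K^B_r(p\mid q^*)\le K^B_{r,s}(x\mid y)$. Concatenating the two prefix-free programs, plus a description of $r$ and $s$ (costing $O(\log r)$ since $s\le r$), outputs $(p,q^*)$, which is a valid approximation to $(x,y)$. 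This direction is essentially immediate.

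\emph{Lower bound.} This is the main obstacle, because the conditional term is a max over all $q$ near $y$, not just the optimal one. The plan is as follows.
\begin{itemize}
\item Take an optimal rational pair $(p',q')$ for $K^B_{r,s}(x,y)$ and apply string symmetry of information: $K^B(p',q')\ge K^B(q')+K^B(p'\mid q')-O(\log r)$, using $K^B(p',q')=O(r+\log|y|)$ so the log-error is $O(\log r)+O(\log\log|y|)$. This is where the $\log\log|y|$ term enters, since encoding the integer part of $y$ costs about $\log|y|$ bits.
\item Bound $K^B(q')\ge K^B_s(y)$ trivially.
\item Show that for every $q\in B_{2^{-s}}(y)$, $K^B_r(x\mid q)\le K^B(p'\mid q')+O(\log r)$. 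Since $|q-q'|\le 2^{1-s}$, given $q$ one can enumerate the $O(1)$ dyadic candidates for $q'$ at precision $s$. I would then use the standard Lutz--Stull counting lemma: the number of $s$-precision approximations $q''$ near $q$ for which a short conditional program outputs some $p$ is bounded. Alternatively, I can replace $q'$ by its dyadic rounding at precision $s$ at cost $O(\log r)$, using the fact that $K_s$ is robust under changing $s$ by a constant.
\end{itemize}
Taking the max over $q$ then yields $K^B_{r,s}(x\mid y)\le K^B(p'\mid q')+O(\log r)$, which closes the lower bound.

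If the uniform-over-$q$ step resists, a fallback is to bound $K^B_r(x\mid q)\le K^B_r(x\mid q')+K^B(q'\mid q)$ and show $K^B(q'\mid q)=O(\log s)$ by the standard fact that the ball of radius $2^{-s}$ contains $O(1)$ dyadic points of precision $s$. This is the crux I would verify most carefully.
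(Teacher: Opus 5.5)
\textbf{There is a genuine gap in the lower bound.}

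The paper gives no proof of this lemma; it quotes it from \cite{lutz2020bounding}. So the comparison is with the standard argument behind that result. Your upper bound is correct: take an optimal $q^*$ for $K^B_s(y)$, pick $p$ with $K^B(p\mid q^*)\le K^B_{r,s}(x\mid y)$, and concatenate programs.

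The lower bound fails at exactly the step you flagged, and not merely for lack of detail.
\begin{itemize}
\item The minimizer $q'$ is an arbitrary rational in $B_{2^{-s}}(y)$, not a dyadic point of precision $s$. So it is not among the ``$O(1)$ dyadic candidates'' computable from $q$, and your fallback claim $K^B(q'\mid q)=O(\log s)$ is false.
\item Worse, the inequality $K^B_r(x\mid q)\le K^B(p'\mid q')+O(\log r)$ can fail outright. Take $x\in\R^n$ and $y\in\R^m$ independent and random relative to $B$. Let $p'$ be a dyadic precision-$r$ approximation of $x$. Let $q'$ be a dyadic approximation of $y$ with the binary digits of $p'$ appended far below the $2^{-s}$ scale.
\item This pair is optimal for $K^B_{r,s}(x,y)$ up to $O(\log r)$. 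Your argument uses nothing about $(p',q')$ beyond near-optimality and $q'\in B_{2^{-s}}(y)$. Yet $K^B(p'\mid q')=O(\log r)$, while $K^B_{r,s}(x\mid y)\ge nr-O(\log r)$.
\item What happened is that the information about $x$ migrated into $K^B(q')$, and your step $K^B(q')\ge K^B_s(y)$ then discards it.
\end{itemize}
Separately, $K^B(p',q')$ can be of order $r+\log|x|+\log|y|$, not $O(r+\log|y|)$. So an error term of $O(\log K^B(p',q'))$ would bring in an unwanted $\log\log|x|$.

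\textbf{The fix.} Canonicalize the $y$-coordinate \emph{before} invoking string symmetry of information. This is what the standard proof does by working with dyadic truncations throughout. Set $\hat q=2^{-s}\lfloor 2^s q'\rfloor$.
\begin{itemize}
\item Since $\hat q$ is computable from $q'$ and $s$, we have $K^B(p',\hat q)\le K^B(p',q')+O(\log s)$.
\item Use the form $K^B(\sigma,\tau)\ge K^B(\tau)+K^B(\sigma\mid\tau)-O(\log K^B(\tau))$, whose error depends only on $\tau$; it follows from $K^B(\sigma,\tau)=K^B(\tau)+K^B(\sigma\mid\tau,K^B(\tau))+O(1)$. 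Apply it with $\tau=\hat q$. Because $K^B(\hat q)=O(s+\log(|y|+2))$, the error is $O(\log s)+O(\log\log|y|)$. This is exactly where the $\log\log|y|$ term comes from.
\item Since $|\hat q-y|\le(1+\sqrt m)2^{-s}$, Lemma~\ref{lem:CaseLutz} gives $K^B(\hat q)\ge K^B_s(y)-O(\log s)$.
\item For every rational $q\in B_{2^{-s}}(y)$, we have $2^s\hat q-\lfloor 2^s q\rfloor\in\{-2,\dots,2\}^m$. Hence $K^B(p'\mid q)\le K^B(p'\mid\hat q)+O(\log s)$, and so $K^B_{r,s}(x\mid y)\le K^B(p'\mid\hat q)+O(\log s)$.
\end{itemize}
Chaining these gives
\[ K^B_{r,s}(x,y)\ge K^B(p',\hat q)-O(\log s)\ge K^B_s(y)+K^B_{r,s}(x\mid y)-O(\log r)-O(\log\log|y|). \]

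Your ``alternatively'' remark points at this rounding, but the rounding only works at the level of the joint complexity. If you do it after symmetry of information, you would need $K^B(p'\mid\hat q)\le K^B(p'\mid q')+O(\log r)$. That inequality goes the wrong way, because $q'$ cannot be recovered from $\hat q$.
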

The following lemma, established by Case and Lutz \cite{case2015dimension}, states that dimension of the ambient space times \(s\) bits of additional information is sufficient to compute the precision of a point to an additional \(s\) digits.
\begin{lem}\label{lem:CaseLutz}
    For any $x \in \R^n$ and any $r,s \in \N$, \begin{equation*}
        K_{r + s}(x) \leq K_r(x) + ns + O(\log r + \log s).
    \end{equation*}
\end{lem}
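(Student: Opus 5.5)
The plan is a direct description argument: from a short description of an approximation of $x$ at precision $r$, plus $ns$ extra bits, produce an approximation of $x$ at precision $r+s$. Throughout, $O(\log r+\log s)$ bits of overhead will pay for encoding $r$ and $s$ self-delimitingly.

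First, fix a rational $p\in B_{2^{-r}}(x)\cap\Q^n$ with $K(p)=K_r(x)$. By the triangle inequality, every point within $2^{-(r+s)}$ of $x$ lies in $B_{2^{-r+1}}(p)$. Next, cover the cube $p+[-2^{-r+1},2^{-r+1}]^n$ by the dyadic grid of side length $2^{-(r+s)}/\sqrt{n}$, so that every grid cell has diameter at most $2^{-(r+s)}$. There are
\[
\bigl(2^{-r+2}\cdot 2^{r+s}\sqrt{n}\bigr)^n = 2^{ns}\cdot 2^{2n} n^{n/2}
\]
cells, so an index for a cell takes $ns+O(1)$ bits, where the constant depends only on $n$. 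Let $q$ be a rational grid point (say a corner or the center) of a cell containing $x$. Then $|q-x|\le 2^{-(r+s)}$. If the strict ball in the definition of $K_r$ is a concern, halve the mesh, which costs only another $O(1)$ bits.

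The algorithm then runs as follows. Given the shortest program for $p$, the integers $r$ and $s$ (encoded with $O(\log r+\log s)$ bits), and the $ns+O(1)$-bit index of the cell, the machine computes $p$, enumerates the grid cells in a canonical order determined by $p$, $r$, $s$ and $n$, and outputs the center $q$ of the indexed cell. Concatenating these prefix-free pieces gives
\[
K_{r+s}(x)\le K(q)\le K_r(x)+ns+O(\log r+\log s).
\]
Relativizing the same argument to any oracle $B$ works verbatim.

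The only delicate step is making the concatenation prefix-free. The index has fixed length $ns+O(1)$ once $s$ is known, so it suffices to decode $r$ and $s$ first and the program for $p$ by universality of $U$. I therefore expect no real obstacle beyond this bookkeeping.
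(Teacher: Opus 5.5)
Your argument is correct, but it takes a different route from the paper, which gives no proof of this lemma and simply imports it from Case and Lutz \cite{case2015dimension}. Before Lemma~\ref{lem:grassmannianCaseLutz}, the authors remark that the original statement has a sharper error term whose proof needs more delicate estimates. Your grid-refinement argument gives up that sharpness in exchange for a short, self-contained proof. You pay $O(\log r+\log s)$ bits to communicate the two scales, and $ns+O_n(1)$ bits to name one of the $2^{ns}\cdot O_n(1)$ subcells of the cube of side $2^{-r+2}$ around $p$. The resulting error is exactly the one stated, and it is all the point-to-set applications here require. It is also the same trade-off the authors make deliberately for their Grassmannian analogue in Lemma~\ref{lem:grassmannianCaseLutz}.

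One small repair is needed. A grid of side $2^{-(r+s)}/\sqrt{n}$ is neither dyadic nor rational unless $n$ is a perfect square, so its cell centers need not lie in $\Q^n$. Take side length $2^{-(r+s)-\lceil \frac{1}{2}\log n\rceil}$ instead. Each cell then still has diameter at most $2^{-(r+s)}$. The number of cells becomes $2^{n(s+\lceil\frac{1}{2}\log n\rceil+2)}$, so the index still costs $ns+O_n(1)$ bits. Using the cell center gives $|q-x|\le 2^{-(r+s)-1}$, so the open-versus-closed ball issue disappears without any further halving of the mesh.
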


Next, we introduce two important technical lemmas which have been utilized in a number of effective arguments \cite{lutz2020bounding, stull2022pinned}; we state them in the form that they appear in \cite{LutStu18Projections}. 

\begin{lem}\label{lem:oracleD}
Let $z \in \R^n$, $\eta \in \Q \cap [0,\dim(z)]$, and $r \in \N$. Then there is an oracle $D = D(r,z,\eta)$ with the following properties.
\begin{enumerate}[label=(\roman*)]
    \item For every $t \le r$,
    \[
    K_t^D(z) = \min\{\eta r,\, K_t(z)\} + O(\log r).
    \]

    \item For every $m,t \in \mathbb{N}$ and $y \in \mathbb{R}^m$,
    \[
    K_{t,r}^D(y \mid z) = K_{t,r}(y \mid z) + O(\log r),
    \]
    and
    \[
    K_t^{z,D}(y) = K_t^z(y) + O(\log r).
    \]
\end{enumerate}
\end{lem}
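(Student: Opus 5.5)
The plan is to build $D$ as a finite string: a truncated binary description of $z$ at some precision $s\le r$, encoded as a natural number so it can serve as an oracle. Define
\[
s=\max\{t\le r : K_t(z)\le \eta r\}.
\]
Such an $s$ exists since $K_0(z)=O(1)$. Because $\eta\le\dim(z)$, for large $r$ this $s$ lies in the range where the complexity profile of $z$ crosses the level $\eta r$. Fix a shortest program $\pi$ for a rational $q\in B_{2^{-s}}(z)$, so that $\ell(\pi)=K_s(z)$. By Lemma~\ref{lem:CaseLutz}, $K_{s+1}(z)\le K_s(z)+n+O(\log r)$, so in the case $s<r$ we have $K_s(z)=\eta r-O(\log r)$. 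The naive choice $D=\pi$ does not quite work for property (ii): a program for $q$ could correlate with $y$ beyond what $z$ itself provides.

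To avoid this, I will follow the standard Lutz--Stull construction. Let $D$ encode the pair $(r,\,K_s(z))$ together with the lexicographically or enumeration-first program of length $K_s(z)$ that outputs a point within $2^{-s}$ of $z$. The key feature is that $D$ is computable from $z$ at precision $r$ using only $O(\log r)$ extra bits. Precision-$r$ information about $z$ computably determines the precision-$s$ ball, and since $s$ and the length are known, dovetailing finds the first such program.

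Property (ii) then follows from this.

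\begin{itemize}
\item Adding $D$ to either a conditional on $z$ at precision $r$ or to an oracle for $z$ supplies information that can already be generated with $O(\log r)$ advice. Hence the complexity changes by at most $O(\log r)$ in one direction.
\item It cannot increase by more than $O(\log r)$ in the other direction, by the second part of Lemma~\ref{lem:conditionalAndOracle}.
\end{itemize}

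Property (i) follows from two estimates.

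\begin{itemize}
\item \textbf{Upper bound.} For $t\le s$, $K^D_t(z)\le K_t(z)$. For $t\le r$ with $t>s$, $D$ gives $z$ to precision $s$, and then Lemma~\ref{lem:CaseLutz} relativized gives $K^D_t(z)\le n(t-s)+O(\log r)$.
\item \textbf{Lower bound.} I would use symmetry of information. Since $D$ is computable from $z$ at precision $t$ (for $t\ge s$) with logarithmic advice,
\[
K^D_t(z)\ge K_t(z)-K(D)-O(\log r),
\]
with $K(D)\le K_s(z)+O(\log r)$.
\end{itemize}

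Combining these when $t\ge s$ gives $K^D_t(z)\ge K_t(z)-\min\{\eta r,K_t(z)\}$. This is the wrong shape, so the construction must be adjusted.

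The main obstacle is exactly this: property (i) asks that $D$ cap the complexity at $\eta r$, not subtract it. The fix is to let $D$ encode the complementary information, namely the precision-$r$ description of $z$ beyond the first $\eta r$ bits of complexity. Concretely, take $s'$ maximal with $K_r(z)-K_{s'}(z)\ge\dots$. Equivalently, choose $D$ so that $K(D)\approx K_r(z)-\eta r$ and $D$ is computable from $z$ at precision $r$.

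With $D$ chosen this way, the bookkeeping runs as follows. For $t\le r$, symmetry of information gives $K^D_t(z)\ge K_t(z)+K^{z}(D)-K(D)-O(\log r)$. Combining this with a monotone cutoff argument yields $K^D_t(z)=\min\{\eta r,K_t(z)\}+O(\log r)$; the remaining difficulty is verifying the cutoff uniformly in $t$.
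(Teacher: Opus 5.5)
\textbf{There is a genuine gap: the replacement construction is never specified, and the condition you give for it does not imply (i).} Your first construction fails for the reason you identify. But asking only that $K(D)\approx K_r(z)-\eta r$ and that $D$ be computable from $z$ at precision $r$ is not enough.

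Take $z$ with $K_t(z)=nt\pm O(\log t)$ for all $t$, and take $\eta=n/2$. Two candidates both meet your two requirements:
\begin{enumerate}
\item[(a)] the first $r/2$ binary digits of the coordinates of $z$;
\item[(b)] the digits of the coordinates in positions $r/2+1,\dots,r$.
\end{enumerate}
With (a), $K^{D}_t(z)=O(\log r)$ for $t\le r/2$, whereas (i) demands $nt$ there. Choice (b) does satisfy (i). So what matters is \emph{which} information $D$ carries: it must be essentially independent of the low-precision digits of $z$.

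Your bookkeeping does not capture this. As written, the term $K^{z}(D)$ in $K^D_t(z)\ge K_t(z)+K^{z}(D)-K(D)-O(\log r)$ is $O(\log r)$, precisely because $D$ is computable from $z$. The bound therefore collapses to the subtractive estimate $K^D_t(z)\ge K_t(z)-K(D)-O(\log r)$, which you already rejected. The term that has to appear is $K(D\mid z\text{ at precision }t)$, and bounding it below for small $t$ is exactly where the choice of $D$ matters. The ``monotone cutoff argument'' you defer, together with the unfinished choice of $s'$, is the whole content of (i).

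\textbf{The missing idea.} This is the construction of Lutz and Stull, which the paper cites rather than reproves: make $D$ a \emph{conditional} description of the high-precision digits given the low-precision ones.
\begin{enumerate}
\item[$\cdot$] \emph{Construction.} Keep your $s=\max\{t\le r: K_t(z)\le\eta r\}$, and write $z_t$ for the truncation of $z$ to precision $t$. Let $D$ encode $r$, $s$, and the first program $\pi$ (in a fixed dovetailing) of length $K(z_r\mid z_s)$ that outputs $z_r$ on input $z_s$.
\item[$\cdot$] \emph{Property (ii).} Your argument applies verbatim, since $D$ is computable from $z_r$ with $O(\log r)$ advice.
\item[$\cdot$] \emph{Upper bound in (i).} For every $t\le r$, $K^D_t(z)\le\min\{K_t(z),K^D_r(z)\}+O(1)$, and $K^D_r(z)\le K_s(z)+O(\log r)\le\eta r+O(\log r)$.
\item[$\cdot$] \emph{Lower bound in (i) for $t\le s$.} Since $z_t$ is computable from $z_s$ and $\pi$ is a shortest program of its kind, $K(\pi\mid z_t)\ge K(\pi\mid z_s)-O(\log r)\ge|\pi|-O(\log r)\ge K(\pi)-O(\log r)$. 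Symmetry of information then gives $K^D_t(z)\ge K_t(z)+K(\pi\mid z_t)-K(\pi)-O(\log r)\ge K_t(z)-O(\log r)$.
\item[$\cdot$] \emph{Lower bound in (i) for $s<t\le r$.} Monotonicity in precision and Lemma~\ref{lem:CaseLutz} give $K^D_t(z)\ge K^D_s(z)\ge K_s(z)-O(\log r)\ge K_{s+1}(z)-O(\log r)>\eta r-O(\log r)$.
\end{enumerate}
This gives the cutoff uniformly in $t$, which your proposal leaves open.
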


\begin{lem}\label{lem:oracleD2}
Let $z \in \R^n$, $B \subseteq \N$, $\eta \in \Q \cap [0,\dim(z)]$, $\ve > 0$, and $r \in \N$. Let $D = D(r,z,\eta)$ be the oracle defined in Lemma \ref{lem:oracleD}. If
\begin{equation*}
K_r^B(z) \ge K_r(z) - \ve r,
\end{equation*}
then
\begin{equation*}
K_r^{B,D}(z) \ge K_r^D(z) - \ve r - O(\log r).
\end{equation*}
\end{lem}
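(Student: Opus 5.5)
The plan is to use the fact that the oracle $D=D(r,z,\eta)$ of Lemma~\ref{lem:oracleD} carries only a bounded amount of information: essentially a finite binary string of length about $K_r(z)-\eta r$, plus $O(\log r)$ bits of bookkeeping. I am assuming the standard construction from \cite{LutStu18Projections}. There, if $\eta r \ge K_r(z)$, $D$ encodes only $O(\log r)$ bits (or nothing). Otherwise, $D$ encodes a prefix $\sigma$ of length $\ell = K_r(z)-\lceil\eta r\rceil$ of a shortest program for a rational $2^{-r}$-approximation of $z$, together with $r$ and $\ell$. Since the oracle is finite data, any machine querying $D$ can be simulated by a machine that receives $\sigma$ as part of its input.

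The key inequality is
\begin{equation*}
K_r^B(z) \le K_r^{B,D}(z) + \ell + O(\log r).
\end{equation*}
To prove it, take a shortest $(B,D)$-oracle program $\pi$ for a $2^{-r}$-approximation of $z$. Prefix $\pi$ with a self-delimiting encoding of $\sigma$, costing $\ell + O(\log r)$ bits. The resulting $B$-oracle machine first reads $\sigma$, then runs $\pi$, answering every query to $D$ from the stored string. This gives a $B$-oracle description of length $K_r^{B,D}(z)+\ell+O(\log r)$.

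Now split into the two cases of the construction.
\begin{enumerate}[label=(\roman*)]
\item If $\eta r \ge K_r(z)$, then $\ell = O(\log r)$. By Lemma~\ref{lem:oracleD}(i), $K_r^D(z)=K_r(z)+O(\log r)$. The key inequality and the hypothesis then give
\begin{equation*}
K_r^{B,D}(z) \ge K_r^B(z) - O(\log r) \ge K_r(z)-\ve r - O(\log r) = K_r^D(z)-\ve r - O(\log r).
\end{equation*}
\item If $\eta r < K_r(z)$, then $\ell = K_r(z)-\eta r + O(1)$, and by Lemma~\ref{lem:oracleD}(i), $K_r^D(z) = \eta r + O(\log r)$. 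The key inequality and the hypothesis give
\begin{equation*}
K_r^{B,D}(z) \ge K_r^B(z) - (K_r(z)-\eta r) - O(\log r) \ge \eta r - \ve r - O(\log r) = K_r^D(z) - \ve r - O(\log r).
\end{equation*}
\end{enumerate}
This is the claimed bound in both cases.

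The only real obstacle is pinning down exactly what $D$ contains. Lemma~\ref{lem:oracleD} states only the properties of $D$, not its construction, so I would restate the construction explicitly: prefix bits of a shortest program together with the parameters $r,\ell$. I would also check that every oracle query to $D$ can be answered from this finite data, uniformly and with only $O(\log r)$ overhead. If instead $D$ were built from a different $\ell$-bit string, the same argument goes through unchanged. All that is needed is that $D$ is computable from $O(\log r)+(K_r(z)-\eta r)^+$ bits and that Lemma~\ref{lem:oracleD}(i) holds at $t=r$.
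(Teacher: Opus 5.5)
The paper gives no proof of this lemma. It is quoted, together with Lemma~\ref{lem:oracleD}, from \cite{LutStu18Projections}, and the oracle goes back to \cite{lutz2020bounding}, so there is no in-paper argument to compare with. Your argument is correct and is the natural one. Everything rests on the counting bound
\[
K_r^B(z)\le K_r^{B,D}(z)+\ell+O(\log r),\qquad \ell=(K_r(z)-\eta r)^+ + O(\log r),
\]
combined with the hypothesis and Lemma~\ref{lem:oracleD}(i) at $t=r$. The case split is not needed: in both cases $K_r(z)-\ell=\min\{\eta r,K_r(z)\}-O(\log r)\ge K_r^D(z)-O(\log r)$.

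One correction: the construction you attribute to Lutz and Stull is not a valid choice of $D$. A prefix of a shortest program for $z$ at precision $r$ can contain a complete description of $z$ at some lower precision $t$ for which $\min\{\eta r,K_t(z)\}$ is large. This happens, for instance, when the program essentially lists the binary digits of $z$ in order. Then $K_t^D(z)=O(\log r)$, which violates Lemma~\ref{lem:oracleD}(i). Property (i) forces $D$ to share only $O(\log r)$ bits with $z$ at every precision $t$ with $K_t(z)\lesssim\eta r$. So the actual oracle is built from high-precision information: roughly, what is needed to go from $z$ at a precision $s$ with $K_s(z)\approx\eta r$ up to precision $r$.

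As you anticipated, this does not affect your proof, because that oracle is still finite data of $(K_r(z)-\eta r)^+ + O(\log r)$ bits. If you prefer not to depend on the details of the construction, it suffices that $D$ is a finite string, of length polynomial in $r$, computable from $z$ at precision $r$ with $O(\log r)$ extra bits. This is essentially what property (ii) says when applied to a real $y$ encoding $D$. Symmetry of information then gives $K(D)+K_r^D(z)=K_r(z)\pm O(\log r)$. Your simulation argument, with $\ell$ replaced by $K(D)$, then gives
\[
K_r^{B,D}(z)\ge K_r^B(z)-K(D)-O(\log r)\ge K_r(z)-\varepsilon r-K(D)-O(\log r)= K_r^D(z)-\varepsilon r-O(\log r).
\]
This route needs no case split and does not use (i) at all.
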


Essentially, sometimes it will be useful for us to lower the complexity of a point at some precision. Morally speaking, our effective argument will show that the complexity of a projection is large by using that projection, along with oracle access to the $k$-plane $V$, to determine the point being projected, meaning the point cannot be much more complicated than its projection. However, this only works if $z$ is a ``simple'' point in the fiber $\{w: p_Vw=p_Vz\}$ (in a certain precise sense). Working relative to $D$, $z$ will satisfy the necessary conditions. The remaining content of Lemma \ref{lem:oracleD} and Lemma \ref{lem:oracleD2} is that working relative to this oracle does not unduly alter the complexity of \emph{other} objects.

J. Lutz introduced a notion of effective Hausdorff dimension for infinite binary sequences \cite{Lutz03b}, and Mayordomo subsequently gave a characterization of this notion in terms of Kolmogorov complexity, which we take to be the definition \cite{Mayordomo02}. Specifically, define \begin{equation*}
    \dim^A(x) := \liminf_{r \to \infty}\frac{K_r^A(x)}{r}.
\end{equation*}

The point-to-set principle of J. Lutz and N. Lutz is a crucial tool that connects effective dimension to classical dimension.

\begin{thm}[\cite{lutz2018algorithmic}]
    For every set $E \subseteq\R^n$, \begin{equation*}
        \dim_H(E) = \min_{B \subset \N}\sup_{x \in E}\dim^A(x).
    \end{equation*}
\end{thm}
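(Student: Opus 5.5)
The statement is the point-to-set principle of J.~Lutz and N.~Lutz: for every $E \subseteq \R^n$, $\dim_H(E) = \min_{A}\sup_{x\in E}\dim^A(x)$ (the $B$ in the statement should read $A$). The plan is to prove two inequalities and to show that the minimum is attained. The main tools are the definition of Hausdorff dimension via coverings, and the fact that a set which an oracle can enumerate carries a Kraft-type bound on the complexity of its members.

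\emph{Lower bound.} Fix an oracle $A$, and suppose $\sup_{x\in E}\dim^A(x) < s$. Then every $x \in E$ has $K^A_r(x) < sr$ for infinitely many $r$.
\begin{enumerate}[label=(\roman*)]
\item For each $r$, let $\mathcal{Q}_r$ be the set of rationals $p$ with $K^A(p) < sr$. There are at most $2^{sr}$ of them.
\item The balls $B_{2^{-r}}(p)$ with $p \in \mathcal{Q}_r$ and $r \ge R$ cover $E$, for every $R$.
\item Their $t$-dimensional cost, for $t > s$, is at most $\sum_{r\ge R} 2^{sr}2^{-tr}$, which tends to $0$ as $R \to \infty$.
\end{enumerate}
Hence $\mathcal{H}^t(E)=0$ for all $t>s$, so $\dim_H(E) \le s$. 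Since $s$ and $A$ were arbitrary, $\dim_H(E) \le \inf_A \sup_{x\in E} \dim^A(x)$.

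\emph{Upper bound and attainment.} Take $s > \dim_H(E)$ rational, so $\mathcal{H}^s(E) = 0$.
\begin{enumerate}[label=(\roman*)]
\item For each $j$, choose a countable cover of $E$ by sets $U_{j,i}$ with $\sum_i \mathrm{diam}(U_{j,i})^s < 2^{-j}$.
\item Replace each $U_{j,i}$ by a dyadic-rational ball of comparable radius. This changes the sum only by a constant factor depending on $n$.
\item Encode the entire sequence of covers, over all $j$, into a single oracle $A_s$.
\item Relative to $A_s$, enumerate all pairs $(\text{ball}, r)$ with radius about $2^{-r}$. Assign weight $2^{-(s r - j)}$ to a ball of radius $2^{-r}$ in the $j$-th cover.
\item The total weight is at most $\sum_j 2^{j}2^{-j}\cdot$(const), which is infinite. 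So refine the weights to $2^{-sr + j/2}$ instead, which makes the total summable. The Kraft--Chaitin (coding) theorem relative to $A_s$ then gives each such ball center a description of length $sr - j/2 + O(\log(rj))$.
\end{enumerate}
Every $x \in E$ lies in some ball of every cover $j$, so for each $j$ there is an $r = r_j$ with $K^{A_s}_{r}(x) \le sr + O(\log r)$. The radii in the $j$-th cover tend to $0$ as $j\to\infty$, since diameter${}^s < 2^{-j}$. Therefore $r_j \to \infty$, and so $\dim^{A_s}(x) \le s$.

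Finally, take a sequence of rationals $s_m \downarrow \dim_H(E)$ and join the corresponding oracles $A_{s_m}$ into one oracle $A$. Extra oracle information only lowers complexity up to $O(\log r)$, by Lemma~\ref{lem:conditionalAndOracle}. So $\sup_{x\in E}\dim^A(x) \le \dim_H(E)$, and combined with the lower bound this shows the minimum is attained at $A$.

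\emph{Main obstacle.} The delicate point is the coding step. It must handle sets $U_{j,i}$ of arbitrary size at one precision $r$ each, with no uniform $r$ across a cover. It must also keep the Kraft sum finite while the $O(\log r)$ overhead stays negligible. Weighting by $j$ as above, or by using only covers with $\mathcal{H}^s_\infty$ sums $2^{-2j}$, is what I would try first.
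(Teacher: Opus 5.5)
The paper gives no proof of this statement. It is quoted from \cite{lutz2018algorithmic} and used as a black box, so there is no in-paper argument to match. Your argument is a correct, self-contained proof, and you are right that $\min_{B}$ should read $\min_{A}$.

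The lower bound is the standard counting argument. The upper bound is the direct coding argument: store a sequence of cheap $\mathcal{H}^s$-covers in an oracle and apply Kraft--Chaitin relative to it. As I recall, the proof in \cite{lutz2018algorithmic} instead relies on the existing theory of relativized constructive dimension, namely J.~Lutz's gale characterization of Hausdorff dimension together with Mayordomo's Kolmogorov-complexity characterization. That route gets the result cheaply from established machinery, and the same machinery (strong gales) yields the packing-dimension analogue. Your route avoids gales and binary expansions entirely and uses only a metric plus a countable dense set of `rational' points. It therefore transfers essentially verbatim to $\mathcal{G}(n,k)$ with the metric $\rho$, which is exactly where the paper later invokes the general principle of \cite{LuLuMay2023PtS}.

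Three small points to tidy.

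First, your initial weight $2^{-(sr-j)}$ was a false start, but no $j$-dependent correction is needed at all. The plain weight $2^{-sr}$ already has total mass $\lesssim \sum_j 2^{-j} < \infty$, and Kraft--Chaitin then gives $K^{A_s}(p) \le sr + O(1)$. So the ``main obstacle'' you flag is not really an obstacle.

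Second, sets of diameter zero in a cover cannot be replaced by a ball of comparable radius. Replace each such set by a rational ball of radius $\rho_{j,i}$ with $(2\rho_{j,i})^s \le 2^{-j-i}$; this at most doubles the bound on the cover sum and keeps the radii tending to $0$.

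Third, for the countable join $A=\bigoplus_m A_{s_m}$ you need $K^{A}(p)\le K^{A_{s_m}}(p)+O_m(1)$. This is immediate, but it is a uniform, countable version of Lemma~\ref{lem:conditionalAndOracle} rather than the two-oracle statement as written.
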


We call any oracle testifying this equality a Hausdorff oracle for $E$. Every set has a Hausdorff oracle, but it will often be useful if a set possesses a special kind of Hausdorff oracle. The notion of a (Hausdorff) optimal oracle, which plays a central role in this paper, was introduced by Stull in \cite{Stull22a}.
\begin{defn}
Let \(E \subseteq \mathbb{R}^n\) and \(A \subseteq \mathbb{N}\). We say that \(A\) is \emph{Hausdorff optimal} for \(E\) if the following conditions are satisfied.
\begin{enumerate}
    \item \(A\) is a Hausdorff oracle for \(E\).
    \item For every \(B \subseteq \mathbb{N}\) and every \(\varepsilon > 0\) there is a point \(x \in E\) such that
    $\dim^{A,B}(x) \geq \dim_H(E) - \ve$
    and for every sufficiently large \(r \in \mathbb{N}\),
    \[
    K_r^{A,B}(x) \geq K_r^A(x) - \ve r.
    \]
\end{enumerate}
\end{defn}

In other words, in a set with optimal oracles, given any oracle $B$, we can always find points testifying to the dimension of the set with complexity that is essentially unaffected by $B$ at every precision. As mentioned, the class of sets that have an optimal oracle strictly contains the analytic sets, among others. 

\subsection{Effective dimension on the Grassmannian}

In \cite{fiedler2025extensionsUnions}, the first author introduced a notion of Kolmogorov complexity on the Grassmannian. We associate a $k$-plane through the origin to its unique orthogonal projection matrix. Then, we use the following metric on the Grassmannian,
\begin{equation*}
    \rho (V_1, V_2) = \sup_{x\in\mathbb{R}^{n}, \vert x\vert =1}\vert p_{V_1}x - p_{V_2}x\vert. 
\end{equation*}
The ``rational'' elements of the Grassmannian are $k$-planes with orthogonal projection matrices having all rational entries. A Turing machine can enumerate the rational orthogonal projection matrices, so it is reasonable to define a notion of Kolmogorov complexity for them. With the above metric and the fact that the rational projection matrices are dense, this is enough to define the complexity at precision $r$ of arbitrary $V\in\mathcal{G}(n, k)$. In particular, 
\begin{equation*}
    K_r^B(V) = \min\{K^B(Q): Q\in \mathcal{G}(n, k)\cap \mathbb{Q}^{n\times n}\cap B_{2^{-r}}(V)\}.
\end{equation*}
We define the effective Hausdorff and packing dimension of points in the obvious way. Furthermore, a general point-to-set principle of J. Lutz, N. Lutz, and Mayordomo holds in this setting \cite{LuLuMay2023PtS}. A similar sequence of definitions leads to notions of complexity in the affine Grassmannian $\mathcal{A}(n, k)\supset\mathcal{G}(n, k)$

The symmetry of information is a crucial tool in almost any application of effective methods, and \cite{fiedler2025extensionsUnions} establishes that it holds in the Grassmannian as well as between the Grassmannian and Euclidean space. Let $\mathcal{X}$ and $\mathcal{Y}$ be any Euclidean space, Grassmannian, or affine Grassmannian. Then we have the following.  
\begin{prop}\label{prop:symmetry}
    Let $X\in\mathcal{X}$ and $Y\in\mathcal{Y}$. For every $B\subseteq\mathbb{N}$ and precisions $r, s$, 
    \begin{equation*}
        K^B_{r, s}(X, Y) = K^B_s(Y) + K_{r, s}^B(X\mid Y) \pm O(\log (r + s)).
    \end{equation*}
\end{prop}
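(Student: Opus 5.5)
This is symmetry of information in mixed spaces, and I would derive it from the Euclidean version (the Lemma of Lutz and Stull stated above) by transferring everything through computable, locally bi-Lipschitz encodings. The idea is to embed each $\mathcal{X}$ into a Euclidean space, show that complexity of an element at precision $r$ equals complexity of its image at precision $r$ up to $O(\log r)$, and then apply the Euclidean symmetry of information to the images.

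For the embedding of $\mathcal{G}(n,k)$, I send $V$ to its orthogonal projection matrix $P_V\in\R^{n\times n}\cong\R^{n^2}$. The metric $\rho$ is the operator norm of $P_{V_1}-P_{V_2}$, which is comparable to the Euclidean (Frobenius) norm on matrices up to constants depending only on $n$. Constants only shift precision by $O(1)$, and by Lemma \ref{lem:CaseLutz} such a shift changes complexity by $O(1)$. The affine Grassmannian is handled similarly: send an affine plane to the pair (projection matrix, foot of the perpendicular from the origin), which is locally bi-Lipschitz for the metric used there.

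The key step is the transfer $K^B_r(V)=K^B_r(P_V)\pm O(\log r)$, and its conditional analogue.
\begin{itemize}
\item One direction is immediate. A rational projection matrix within $2^{-r}$ of $V$ is a rational point of $\R^{n^2}$ within $O(2^{-r})$ of $P_V$, so $K^B_{r-O(1)}(P_V)\le K^B_r(V)$.
\item The other direction is the obstacle. A rational matrix $M$ near $P_V$ need not be a projection matrix, so I need a computable rounding procedure. Given $M$ and $r$, the machine searches the enumeration of rational orthogonal projection matrices of rank $k$ for one within $C2^{-r}$ of $M$. Such a matrix exists because rational projections are dense in $\mathcal{G}(n,k)$ and $P_V$ is within $2^{-r}$ of $M$. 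Whichever matrix is found first lies within $(C+1)2^{-r}$ of $V$. The program for $M$ plus an $O(\log r)$-bit description of $r$ therefore yields a rational point of $\mathcal{G}(n,k)$ near $V$, which gives $K^B_{r+O(1)}(V)\le K^B_r(P_V)+O(\log r)$.
\end{itemize}
The same search works when the input is a conditional $q$ rather than oracle data. Since the conditional complexity is defined with a max over approximations $q$ of $Y$, I must also check the reverse direction: every rational approximation of $P_Y$ can be converted to an approximation of $Y$ and vice versa, with $O(\log(r+s))$ overhead. The joint complexity $K_{r,s}(X,Y)$ is handled the same way, by applying the rounding procedure coordinatewise to the pair.

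With these transfers in hand, I apply the Euclidean symmetry of information to $(\iota(X),\iota(Y))\in\R^{a}\times\R^{b}$. The Euclidean lemma is stated for $r\ge s$. For $r<s$, I would use the version with the roles of $r,s$ adjusted, or reduce to the case $r\ge s$ via Lemma \ref{lem:CaseLutz} applied to the lower-precision coordinate. The $O(\log\log|y|)$ term is bounded because projection matrices have norm at most $1$. For the affine Grassmannian it is absorbed into a constant depending on $Y$; if needed I would instead cite the general-metric-space symmetry of information underlying the point-to-set principle of \cite{LuLuMay2023PtS}. Combining the transfers and the Euclidean bound gives the stated $\pm O(\log(r+s))$ error.
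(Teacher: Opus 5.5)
The paper does not prove this proposition; it quotes it from \cite{fiedler2025extensionsUnions}, so your argument has to stand on its own. Transferring through $V\mapsto P_V$ with search-based rounding is a sensible way to handle the unconditional and joint complexities. The last step, however, has a real gap. The proposition is claimed for \emph{all} $r,s$, but the Euclidean symmetry of information you invoke is only available for $r\ge s$.

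The case $r<s$ is genuinely needed. The proof of Theorem~\ref{thm:1} uses it with $X=V$ at precision $r-t$ and $Y=z$ at precision $r$, to write $K_{r-t,r}(V,z)=K_r(z)+K_{r-t,r}(V\mid z)$. Neither workaround you offer supplies this case:
\begin{itemize}
\item Interchanging the roles of $(X,r)$ and $(Y,s)$ yields $K_{r,s}(X,Y)=K_r(X)+K_{s,r}(Y\mid X)$. This conditions on the other variable, so it is a different identity.
\item Moving either coordinate to the other precision with Lemma~\ref{lem:CaseLutz} costs up to $\dim(\mathcal{X})\,(s-r)$ bits, not $O(\log(r+s))$.
\end{itemize}
One way to close the gap, for $s\ge r+2$, is to replace $\iota(X)$ by its dyadic truncation $\hat{x}$ at precision $r$. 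This $\hat{x}$ is a rational point with two useful properties. It is recoverable exactly from any $2^{-s}$-approximation of itself together with $r$. It is also computable from any $2^{-r}$-approximation of $\iota(X)$ with $O(\log r)$ extra bits. Hence $K_{s,s}(\hat{x},\iota(Y))$ and $K_{s,s}(\hat{x}\mid\iota(Y))$ agree with $K_{r,s}(\iota(X),\iota(Y))$ and $K_{r,s}(\iota(X)\mid\iota(Y))$ up to $O(\log(r+s))$, modulo an $O(1)$ precision shift handled as below. Then apply the Euclidean lemma at equal precisions $(s,s)$. Alternatively, prove the Euclidean statement directly from string symmetry of information for the truncations, which has no ordering restriction.

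Two smaller points. First, your search actually gives $K^B_{r-O(1)}(V)\le K^B_r(P_V)+O(\log r)$, not $K^B_{r+O(1)}(V)\le\cdots$. This is harmless if you refine on the Euclidean side: request $P_V$ at precision $r+c$ and pay $O(\log r)$ via Lemma~\ref{lem:CaseLutz}.

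Second, and more substantively, the conditional transfer loses precision in the condition. The rounding turns a $2^{-s}$-approximation of $P_Y$ into an approximation of $Y$ only at precision $s-c$. Since $K_{r,s}(X\mid Y)$ is a maximum over $B_{2^{-s}}(Y)$, you need robustness in the \emph{conditioning} precision:
\[
K_{r,s-c}(X\mid Y)\le K_{r,s}(X\mid Y)+O(\log(r+s)).
\]
Lemma~\ref{lem:CaseLutz} does not give this, since it only refines the target. It does hold by a net argument. From the coarse condition $q$, a constant number of bits (plus $O(\log s)$ for $s$) names a grid point of $\R^{n^2}$ within $2^{-s-2}$ of $P_Y$. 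Your search then rounds this grid point to a rational projection matrix within $2^{-s}$ of $Y$, and the maximum defining $K_{r,s}(X\mid Y)$ applies to that matrix.
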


The point-to-set principle and the symmetry of information are key algorithmic tools. However, to prove certain theorems in the Grassmannian, it is also essential to have a number of more geometric tools. First, knowing points that span a $k$-plane is enough to determine the $k$-plane.
\begin{lem}[\cite{fiedler2025extensionsUnions}]\label{lem:pointsDeterminePlanes}
Let $P = V+t\in \mathcal{A}(n, k)$, and let $p_0, ...,  p_{k}$ be a set of points in $P$. Suppose $p_0, ...,  p_{k}$ is such that $(p_1 - p_{0}), ...,  (p_{k}-p_{0})$ spans $V\in \mathcal{G}(n, k)$. Then for every oracle $A\subseteq\mathbb{N}$ and $r\in \mathbb{N}$ sufficiently large (depending on $p_0, ...,  p_k$),
\begin{equation*}
    K^B_{r}(P, p_0, ...,  p_k)\leq K^B_r(p_0, ..., p_k) + O_{n, k, \sigma, \vert p_0\vert}(\log r),
\end{equation*}
where $\sigma$ is the smallest nonzero singular value of the matrix with $(p_1-p_{0}), ...,  (p_{k}-p_{0})$ as its columns. 
\end{lem}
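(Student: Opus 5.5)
The statement to prove is Lemma~\ref{lem:pointsDeterminePlanes}. The plan is to exhibit a short program that, given $2^{-r}$-approximations of $p_0,\dots,p_k$ (which are themselves obtained from shortest descriptions of those rationals), outputs a rational element of $\mathcal{A}(n,k)$ within $2^{-r}$ of $P$. Together these show that the joint complexity of $(P,p_0,\dots,p_k)$ exceeds that of $(p_0,\dots,p_k)$ by only a logarithmic term. By the symmetry of information (Proposition~\ref{prop:symmetry}), it suffices to show
\[
K^B_{r,r}(P\mid p_0,\dots,p_k)\le O_{n,k,\sigma,|p_0|}(\log r),
\]
and this conditional bound is what I will prove. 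If one prefers to avoid symmetry, the program can instead take the approximations of the $p_i$ directly as part of its input.

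First, work at a slightly higher precision $r' = r + c$. Here $c$ is a constant depending on $\sigma$, $n$, $k$ and $|p_0|$ (the precision at which the $p_i$ are known enters the conditional complexity only through an $O(\log r)$ term, which costs nothing). Let $q_0,\dots,q_k$ be rationals with $|q_i-p_i|<2^{-r'}$. Form the matrix $M_q$ whose columns are $q_i-q_0$, and let $M_p$ be the corresponding matrix for the $p_i$. Then $\|M_q-M_p\|\le \sqrt{k}\,2^{-r'+1}$. Since $M_p$ has rank $k$ with smallest singular value $\sigma$, Weyl's inequality gives that for $r$ large (this is where ``sufficiently large depending on the $p_i$'' enters) the smallest singular value of $M_q$ is at least $\sigma/2$.

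Next comes the geometric step, which I expect to be the main obstacle. The orthogonal projection onto the column space is $\Pi(M)=M(M^TM)^{-1}M^T$, and I need it to be Lipschitz with a constant controlled by $\sigma$: I will show $\|\Pi(M_q)-\Pi(M_p)\|\le C\|M_q-M_p\|/\sigma$ by perturbation theory (for instance a Wedin/Davis–Kahan $\sin\Theta$ bound, or directly differentiating $(M^TM)^{-1}$, using $\|(M^TM)^{-1}\|\le 4/\sigma^2$). Thus $\rho(\mathrm{span}\,M_q, V)\le 2^{-r-1}$ once $c$ is chosen large. The translation part $t=p_0-\Pi(M_p)p_0$, the point of $P$ nearest the origin, is similarly approximated by $q_0-\Pi(M_q)q_0$, with error bounded in terms of $|p_0|$ and $C/\sigma$. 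This step also needs to be compatible with however the metric on $\mathcal{A}(n,k)$ combines the linear part and the translation; the dependence of the constant on $|p_0|$ is accounted for here.

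The last step is computability. $\Pi(M_q)$ is a rational matrix computable from the $q_i$, but it is a rational projection matrix only in exact arithmetic, which is fine since all entries stay rational. The translation $q_0-\Pi(M_q)q_0$ is likewise rational. So a fixed Turing machine, given the $q_i$ and the $O(\log r)$ bits encoding $r$ and $c$, outputs a rational affine plane within $2^{-r}$ of $P$. This yields the conditional bound up to the oracle-independent constant. The oracle $B$ plays no role beyond being carried along, so the bound holds relative to every $B$.
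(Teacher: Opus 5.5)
Your proposal is correct. This paper does not prove the lemma itself: it is quoted from \cite{fiedler2025extensionsUnions} and used only as a black box (for instance with $p_0=0$ and the columns $A'_j$ in the proof of Lemma~\ref{lem:coordinates}). So there is no in-paper argument to compare against, but your route is the natural one. You boost the precision of the $p_i$ by a constant $c=c(n,k,\sigma,|p_0|)$, output the exactly rational projector $\Pi(M_q)=M_q(M_q^TM_q)^{-1}M_q^T$ with the rational translation $q_0-\Pi(M_q)q_0$, and control the errors by $\sigma$ and $|p_0|$.

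The step you flag as the main obstacle is in fact one line. Since $(I-\Pi(M_q))M_q=0$,
\[
(I-\Pi(M_q))\,\Pi(M_p)=(I-\Pi(M_q))(M_p-M_q)M_p^{+}.
\]
Both column spaces have dimension $k$, because $\sigma_{\min}(M_q)\ge\sigma/2>0$. For subspaces of equal dimension,
\[
\|\Pi(M_q)-\Pi(M_p)\|=\|(I-\Pi(M_q))\Pi(M_p)\|\le \|M_p^{+}\|\,\|M_q-M_p\|=\|M_q-M_p\|/\sigma .
\]
So the Lipschitz constant depends only on $\sigma$, not on the spread $\|M_p\|$ of the points, which matches the stated dependence.

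Two small remarks. First, raising the precision of the $p_i$ from $r$ to $r+c$ costs $n(k+1)c+O(\log r)$ bits by Lemma~\ref{lem:CaseLutz}, not merely $O(\log r)$. This is harmless, since $c$ depends only on $n,k,\sigma,|p_0|$. Second, a real advantage of your formula $M(M^TM)^{-1}M^T$ over a Gram--Schmidt approach is that no square roots appear. The output is therefore already a rational element of $\mathcal{G}(n,k)$, and no further rounding into the rational projection matrices is needed.
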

The value of $\sigma$ essentially quantifies how degenerate the points are. For instance, one expects a (large but fixed) loss in precision for determining the plane spanned by three nearly collinear points, but this error becomes negligible at higher and higher resolutions.

The next lemma essentially states that the orthogonal complement of a subspace is computable from the subspace. 
\begin{lem}[\cite{fiedler2025extensionsUnions}]\label{lem:planeDeterminesComplement}
    Let $V\in\mathcal{G}(n, k)$, a precision $r$, and $A\subseteq\mathbb{N}$ be given. 
\begin{equation*}
    K_r^B(V^\perp)\leq K_r^B(V) + O_{n, k}(\log r).
\end{equation*}
\end{lem}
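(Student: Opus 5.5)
The argument is a direct computation. Since $V$ is identified with its orthogonal projection matrix $p_V$, the complement $V^\perp$ is identified with $p_{V^\perp}=I-p_V$. The proof therefore reduces to showing that a rational approximation of $p_V$ can be converted, by a uniform algorithm, into a rational approximation of $I-p_V$ that is itself a \emph{rational orthogonal projection matrix of rank $n-k$}. The only subtlety is that the complexity on the Grassmannian is defined as a minimum over rational projection matrices, not over arbitrary rational matrices.

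The plan is as follows. Let $Q$ be a rational orthogonal projection matrix of rank $k$ with $\rho(Q,V)<2^{-r}$ and $K^B(Q)=K^B_r(V)$. Set $Q'=I-Q$. Then:
\begin{itemize}
\item $Q'$ has rational entries.
\item $Q'$ is symmetric and idempotent, since $(I-Q)^2=I-2Q+Q^2=I-Q$.
\item $Q'$ has rank $n-k$, since its trace is $n-k$.
\end{itemize}
Hence $Q'$ is a rational element of $\mathcal{G}(n,n-k)$; indeed it is exactly the projection onto $W^\perp$, where $W=\operatorname{range}(Q)$. Moreover, for every unit vector $x$,
\[
|p_{V^\perp}x-Q'x| = |(I-p_V)x-(I-Q)x| = |Qx-p_Vx|,
\]
so $\rho(Q',V^\perp)=\rho(Q,V)<2^{-r}$. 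The metric $\rho$ is thus preserved exactly, and no precision is lost.

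It remains to describe $Q'$ from $Q$. Consider a fixed Turing machine $M$ that, on an input $\pi$ with $U^B(\pi)=Q$, simulates $U^B$ to obtain $Q$, checks that it is an $n\times n$ matrix, and outputs the encoding of $I-Q$. By universality of $U$, this gives
\[
K^B(Q')\leq K^B(Q)+O(1),
\]
where the constant depends only on $M$ (and so on $n$). Therefore
\[
K^B_r(V^\perp)\leq K^B(Q')\leq K^B_r(V)+O_{n,k}(1),
\]
which is even stronger than the stated $O_{n,k}(\log r)$.

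The only place requiring care is the encoding of rational matrices. We need the map $Q\mapsto I-Q$ to be computable on encodings, which it clearly is. If the encoding of the rational Grassmannian records $k$, or is an enumeration of $\mathcal{G}(n,k)\cap\mathbb{Q}^{n\times n}$ by index, we also need a translation between the two enumerations. Such a translation is computable given $n$ and $k$: enumerate $\mathcal{G}(n,n-k)\cap\mathbb{Q}^{n\times n}$ until $I-Q$ appears. This costs only $O_{n,k}(1)$, or at worst $O(\log r)$ if precision-dependent padding is used. I do not expect any genuine obstacle. The main point to emphasize is that $I-Q$ is automatically a valid \emph{rational} projection of the correct rank, so no rounding or re-orthogonalization step is needed.
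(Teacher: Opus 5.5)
Your proof is correct. This paper does not prove the lemma itself; it quotes it from \cite{fiedler2025extensionsUnions}. So there is no in-paper argument to compare against, and yours is a complete, self-contained proof of the natural kind.

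The key observation you isolate is exactly right. The map $P\mapsto I-P$ sends rational orthogonal projections of rank $k$ to rational orthogonal projections of rank $n-k$, and it is an isometry for $\rho$, since $\|(I-p_V)-(I-Q)\|_2=\|p_V-Q\|_2$. Hence any witness $Q$ for $K^B_r(V)$ yields a witness $I-Q$ for $K^B_r(V^\perp)$ with no loss of precision. It does not matter whether $B_{2^{-r}}(V)$ is read as open or closed, because the distance is preserved exactly. You therefore get an $O_{n,k}(1)$ error, which is stronger than the stated $O_{n,k}(\log r)$.

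By contrast, the paper handles its other Grassmannian facts through bases (Lemmas \ref{lem:goodBasis}--\ref{lem:coordinates} and Lemma \ref{lem:pointsDeterminePlanes}). That route incurs conditioning constants and a bounded loss of precision, and such losses are what force the logarithmic error terms there. Working directly with projection matrices, as you do, avoids all of that.
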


Finally, points in a $k$-plane cannot have complexity much higher than $kr$ at precision $r$, conditioned on that $k$-plane at precision $r$. 
\begin{lem}[\cite{fiedler2025extensionsUnions}]\label{lem:pointsOnAPlane}
    Suppose $P=V+t\in \mathcal{A}(n, k)$, and let $x\in P$. Then
\begin{equation*}
       K_r^B(x\mid P)\leq k r + O(\log r). 
\end{equation*}
\end{lem}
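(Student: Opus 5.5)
The statement to prove is Lemma~\ref{lem:pointsOnAPlane}: if $x \in P = V+t$, then conditioned on $P$ at precision $r$, the point $x$ can be described with about $kr$ bits. The plan is to have a machine, given a rational approximation of $P$, build an explicit $2^{-r}$-net of the relevant part of the plane with about $2^{kr}$ elements. It then suffices to specify the index of a net point close to $x$, which costs $kr + O(\log r)$ bits.

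\textbf{Step 1: recover a parametrization of the plane.} The conditioning input is a rational affine plane $Q = W + q$ within $2^{-r}$ of $P$. Here $W$ has a rational orthogonal projection matrix $M$, and $q$ is a rational translation, which we may take in $W^\perp$. From $M$ the machine computes a basis of $W$ to arbitrary precision. Concretely, it runs Gram--Schmidt on the columns of $M$ (these span $W$) and selects $k$ independent columns.

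We also need a bound on $|x|$, so that only a bounded region of the plane has to be covered. This is where the $O(\log r)$ term, possibly depending on $|x|$, enters: we encode $\lceil \log_2 |x|\rceil$ with $O(\log\log|x|)$ bits, which is absorbed since the constant may depend on $x$. Alternatively one can note that $|x - q| \le R$ for a fixed $R$ depending on $x$ and $t$.

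\textbf{Step 2: build the net.} Let $e_1,\dots,e_k$ be the computed orthonormal basis of $W$. Consider the points
\[
q + \sum_i (a_i 2^{-r-c}) e_i, \qquad a_i \in \mathbb{Z},\ |a_i| \le R\, 2^{r+c}.
\]
There are $O_R(2^{k(r+c)})$ of them, and each is rounded to a rational vector at precision $2^{-r-c}$. Since $x$ lies within $O(2^{-r})$ of $Q$ (by the closeness of $P$ and $Q$ in the affine Grassmannian metric, with a constant depending on $|x|$), some net point lies within $O(2^{-r})$ of $x$. The program consists of $r$, $R$ and the index of that point. Its length is therefore $kr + O(\log r)$, with the constants $c$ and $R$ absorbed.

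\textbf{Step 3: pass from $O(2^{-r})$ to $2^{-r}$.} The net point is only within $O(2^{-r})$ of $x$, so we apply Lemma~\ref{lem:CaseLutz}-type reasoning: shifting precision by a constant costs only $O(1)$ bits. Finally, the maximum over all approximations $Q$ in the definition of conditional complexity is harmless, because the construction works uniformly for every $Q$ in the $2^{-r}$-ball around $P$.

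\textbf{Main obstacle.} The main difficulty is making the geometric comparison precise: showing that $2^{-r}$-closeness of $Q$ to $P$ in the affine Grassmannian metric, which was defined via projection matrices together with translations, implies that $x$ is $O_{|x|}(2^{-r})$-close to $Q$. I would handle this by writing $x = p_V x + t$ with $t \in V^\perp$. Then
\[
\operatorname{dist}(x, Q) \le |p_V x - p_W x| + |t - q| \le \rho(V,W)\,|x| + 2^{-r},
\]
which gives the bound with constant $|x| + 1$.
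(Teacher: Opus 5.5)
This lemma is not proved in the paper; it is imported from \cite{fiedler2025extensionsUnions}, so there is no in-paper argument to compare yours with. Your proof is correct, and it is the natural argument.

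Your estimate $\operatorname{dist}(x,Q)\le\rho(V,W)|x|+|t-q|$ is right. It also holds uniformly over every admissible approximation $Q$, which is what the maximum in the definition of conditional complexity demands. Naming a cell of a $2^{-r-c}$-grid of coordinates in a box of side $O(|x|)$ in $W$ then costs $kr+O_{|x|}(1)$ bits, beyond the $O(\log r)$ bits used to encode $r$.

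Your Step 3 is genuinely needed, since in general no point of $Q$ lies within $2^{-r}$ of $x$. However, Lemma \ref{lem:CaseLutz} is stated for unconditional complexity, so you cannot cite it directly here. Either observe that its proof goes through with conditional input, or argue directly: a ball of radius $C2^{-r}$ is covered by $O_n(C^n)$ balls of radius $2^{-r}$ whose rational centers are computable from the center and $r$, so the refinement costs $O(n\log C)$ extra bits.

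As a side remark, this paper's own machinery gives an equivalent but slightly cleaner parametrization. The basis $A'$ of Lemma \ref{lem:coordinates}, with $A'_I=I_k$, can be computed from a precision-$r$ approximation of $V$ to within $O_{n,k}(2^{-r})$ using $O(1)$ advice. Since $x-t=A'(x-t)_I$, the point $x$ is recovered from $P$ together with the $k$ reals $(x-t)_I$ at precision $r+O(1)$. This avoids Gram--Schmidt and approximate orthonormalization.

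In either version the implied constant depends on $|x|$. That dependence is unavoidable because of the $\rho(V,W)|x|$ term, and it is consistent with how the paper uses the lemma (compare the dependence on $|p_0|$ in Lemma \ref{lem:pointsDeterminePlanes}).
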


\section{Additional lemmas for the Grassmannian}

This section covers two main lemmas. First, we will establish a version of Lemma \ref{lem:CaseLutz} for the Grassmannian. Second, we will prove another geometric lemma which bounds the complexity of a $k_1$-plane conditioned on a larger $k_2$-plane that contains it. 

To see why the first lemma is more challenging in the Grassmannian than $\mathbb{R}^n$, consider that results on Kolmogorov complexity for real numbers extend naturally to real vectors. 
The main reason is that $\R^n$ comes with an obvious, canonical coordinate system: any point in $\R^n$ is simply an $n$-tuple of real numbers. 
We adopt the same philosophy and seek an analogous coordinate description for $k$-planes, even though the structure of the Grassmannian is less intuitive.

Recall that we associate a $k$-plane $V\subseteq \R^n$ to its orthogonal projection matrix $P_V$. While $P_V$ is an $n\times n$ matrix, the space $\G(n,k)$ has dimension $k(n-k)$, so we would like a parametrization with only $k(n-k)$ real degrees of freedom. A slightly better representation is given by a basis matrix $A\in \R^{n\times k}$ whose columns form a basis of $V$. 
This still uses $nk$ entries, but it can be further improved by an important fact: since $\mathrm{rank}(A)=k$, there exists an index set $I\subseteq [n]$ with $|I|=k$ such that the $k\times k$ row submatrix $A_I$ is invertible. In general, we will use the subscript $I$ to denote this row submatrix for a given matrix. 
Define
\[
A' := A\,A_I^{-1}.
\]
Then $A'$ has the same column space as $A$ (hence spans the same $k$-plane), and moreover
\[
A'_I = I_k.
\]
Thus, once the choice of $I$ is fixed (which can be specified with $O(1)$ overhead when $n,k$ are fixed), the remaining degrees of freedom of $A'$ are exactly the entries in the complementary rows, giving $k(n-k)$ free parameters.

However, an arbitrary invertible submatrix is not sufficient in general: we must choose $I$ in a \emph{stable} way so that the new basis $A'$ can be well approximated from an approximation of the underlying $k$-plane. 
The next straightforward observation (see, for instance, the proof of Corollary 23 in \cite{fiedler2025extensionsUnions}) captures the key fact behind this control: for every $V\in \G(n,k)$ there exists a choice of coordinate rows whose associated basis matrix has singular values bounded away from $0$ by a constant depending only on $n$ and $k$. Then based on this, we seek an invertible submatrix $A_I$ for which both $\|A_I\|$ and $\|A_I^{-1}\|$ admit good bounds. 
 \begin{obs}\label{obs:singularValueLowerBound}
        Let $n,k$ be given. Let $\sigma(v_1,\ldots,v_m)$ denote the smallest singular value of the
matrix with $v_1,\ldots,v_m$ as columns and let $e_i$ denote the $i$th standard basis vector. Then there is some constant $\widehat{C}_{n,k}$ such that,
\begin{equation*}
\inf_{V\in \G(n,k)}\ \max_{1\le i_1<\cdots<i_{k}\le n}\ 
\sigma\bigl(Ve_{i_1},..,Ve_{i_k}\bigr)
\ \ge\ \widehat{C}_{n,k}\ >\ 0.
\end{equation*}
    \end{obs}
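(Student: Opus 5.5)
Here $V$ denotes the orthogonal projection matrix $P_V$, so $Ve_i$ is the $i$th column of $P_V$. I would prove the observation by combining compactness of $\G(n,k)$ with continuity of singular values.

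The first step is pointwise positivity. Fix $V\in\G(n,k)$. The columns $P_Ve_1,\dots,P_Ve_n$ span the range of $P_V$, which is $V$. Since $\dim V=k$, some $k$ of these columns, say $P_Ve_{i_1},\dots,P_Ve_{i_k}$, are linearly independent. The $n\times k$ matrix with these columns then has rank $k$, so its smallest singular value $\sigma(P_Ve_{i_1},\dots,P_Ve_{i_k})$ is strictly positive. Hence the function
\[
f(V):=\max_{1\le i_1<\cdots<i_k\le n}\sigma\bigl(P_Ve_{i_1},\dots,P_Ve_{i_k}\bigr)
\]
is positive at every $V$.

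The second step is continuity of $f$ with respect to $\rho$. The metric $\rho(V_1,V_2)$ is exactly the operator norm $\|P_{V_1}-P_{V_2}\|$. For any index set $I$, the $n\times k$ matrix $P_V[e_{i_1},\dots,e_{i_k}]$ therefore moves by at most $\rho(V_1,V_2)$ in operator norm. By Weyl's perturbation inequality for singular values, $\sigma_{\min}$ is $1$-Lipschitz in operator norm. So each map $V\mapsto\sigma(P_Ve_{i_1},\dots,P_Ve_{i_k})$ is $1$-Lipschitz, and $f$, being a finite maximum of such maps, is $1$-Lipschitz as well.

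The third step is compactness. $\G(n,k)$, identified with the set $\{P\in\R^{n\times n}: P^2=P=P^T,\ \mathrm{tr}\,P=k\}$, is closed and bounded, since $\|P\|_F=\sqrt{k}$. It is therefore compact under $\rho$, which is equivalent to the Frobenius metric on matrices. A continuous positive function on a compact set attains a positive minimum, and we take $\widehat C_{n,k}$ to be this minimum. Because $f$ is invariant under the symmetric-group action permuting coordinates, nothing beyond $n$ and $k$ enters, and the constant depends only on $n,k$.

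There is no real obstacle here. The only point needing care is the identification of $\rho$ with the operator norm and the resulting Lipschitz bound. If an explicit constant were wanted, one could argue instead via Cauchy–Binet: $\sum_I\det\bigl((P_V)_{I,I}\bigr)=\det\bigl(U^TU\bigr)=1$ for an orthonormal basis matrix $U$ of $V$. Hence some $I$ has principal minor at least $\binom{n}{k}^{-1}$, which bounds $\sigma_{\min}(U_I)$ and thus $\sigma\bigl(P_Ve_{i_1},\dots,P_Ve_{i_k}\bigr)=\sigma_{\min}(U_I)$ from below. The compactness argument, however, suffices for the observation as stated.
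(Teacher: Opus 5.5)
Your proof is correct. The columns of $P_V$ span $V$, so some $k$ of them form a rank-$k$ matrix. Since $\rho(V_1,V_2)=\|P_{V_1}-P_{V_2}\|_2$ and $\sigma_{\min}$ is $1$-Lipschitz in the spectral norm (the same fact the paper invokes in the proof of Lemma~\ref{lem:coordinates}), $f$ is $1$-Lipschitz. A positive continuous function on the compact set $\G(n,k)$ then has a positive minimum. The paper does not prove this observation itself; it calls it straightforward and refers to the proof of Corollary~23 in \cite{fiedler2025extensionsUnions}. Your argument therefore supplies a self-contained justification rather than reproducing one given here. The appeal to permutation invariance is unnecessary: $\min_{V\in\G(n,k)}f(V)$ depends only on $n$ and $k$ by its definition.

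Compactness yields only an inexplicit $\widehat{C}_{n,k}$, which is all the paper ever uses. Your Cauchy--Binet alternative is quantitative and fits the paper's own toolkit better. Write $P_V=UU^T$ with $U^TU=I_k$. The matrix with columns $P_Ve_{i_1},\dots,P_Ve_{i_k}$ is $UU_I^T$, whose singular values are those of $U_I$. From $\sum_I\det(U_I)^2=\det(U^TU)=1$, some $I$ has $|\det U_I|\ge\binom{n}{k}^{-1/2}$. The inequality $|\det U_I|\le\sigma_{\min}(U_I)\|U_I\|_2^{k-1}\le\sigma_{\min}(U_I)$, using $\|U_I\|_2\le\|U\|_2=1$, is the step you left implicit; it gives $\widehat{C}_{n,k}=\binom{n}{k}^{-1/2}$. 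This is the same determinant-averaging argument as Lemma~\ref{lem:goodSubmatrix}. With it, the bounds in Lemmas~\ref{lem:goodSubmatrix} and~\ref{lem:coordinates} that depend on $\widehat{C}_{n,k}$ become explicit in $n$ and $k$.
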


Note that, since the space of \(n \times n\) matrices is finite dimensional, all matrix norms are equivalent up to constants depending only on \(n\). In particular, after identifying each \(V \in \G(n,k)\) with its orthogonal projection matrix, these constants are harmless in effective dimension arguments. We choose to work with the spectral norm, i.e. the operator norm induced by \(\ell_2\). There are two main reasons for choosing this norm: first, controlling the spectral norm yields useful bounds on singular values, and conversely, bounds on singular values translate into norm bounds; in addition, the metric we use on the Grassmannian is just the spectral norm of the difference of the orthogonal projection matrices:
\[
\rho(V_1,V_2)
=\sup_{x\in\mathbb{R}^n, |x|=1}
\bigl|p_{V_1}x-p_{V_2}x\bigr|
=\|p_{V_1}-p_{V_2}\|_2.
\]

We will also need a few facts in linear algebra to carry out the required computations. For convenience, we collect all of the tools we use in the following lemma.

\begin{lem}\label{lem:linearAlgebra}
    Let $A,A'\in \R^{n \times m}$. Let $\sigma(A)$ denote its smallest singular value, and $\sigma_{\max}(A)$ denote its largest singular value. We then have a few results:
    \begin{enumerate}[label=(\roman*)]
        \item If $n = m$, then $|\det(A)| = \prod_{i = 1}^m\sigma_i(A)$.
        \item $\|A\|_2 = \sigma_{\max}(A)$.
        \item If $n = m$ and $A$ is invertible, then $\|A^{-1}\|_2 = \frac{1}{\sigma(A)}$.
        \item $\min_{\|x\|_2 = 1}\|Ax\|_2 = \sigma(A)$
        \item If $A_I$ is a submatrix of $A$, then $\|A_I\|_2 \leq \|A\|_2$.
        \item If $A = BC$, then $\|A\|_2 \leq \|B\|_2\|C\|_2$
       \item 
       $\|A-A'\|_2 \le \sqrt{\sum_{j \in [m]} \|A_j-A'_j\|_2^2}$,
        where \(A_j\) and \(A'_j\) denote the \(j\)-th columns of \(A\) and \(A'\), respectively.
        \item If $A,A'$ are invertible and $\|A^{-1}\|_2\|A-A'\|_2<1$, then \begin{equation}\label{inverseDistanceBound}
            \|A^{-1} - (A')^{-1}\|_2 \leq \frac{\|A^{-1}\|^2_2\|A-A'\|}{1 - \|A^{-1}\|_2\|A-A'\|_2}.
        \end{equation}
    \end{enumerate}
\end{lem}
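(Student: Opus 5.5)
These eight items are standard facts from matrix analysis, and the plan is to derive them all from the singular value decomposition $A = U\Sigma W^{T}$. Here $U\in\R^{n\times n}$ and $W\in\R^{m\times m}$ are orthogonal, and $\Sigma$ is diagonal with entries $\sigma_1\ge\cdots\ge 0$. Throughout we use that orthogonal matrices preserve $\ell_2$ norms.

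\textbf{Items (ii) and (iv).} First I will show that $\|Ax\|_2 = \|\Sigma W^{T}x\|_2$ for every $x$. Setting $y = W^{T}x$ gives $|y| = |x|$, so the supremum and infimum over unit vectors reduce to those of $\|\Sigma y\|_2^2=\sum_i \sigma_i^2 y_i^2$ over unit $y$. This quantity is maximized at $\sigma_{\max}^2$ and minimized at $\sigma(A)^2$. For (iv), if $n<m$ the smallest singular value is understood to be $0$, consistent with a nontrivial kernel.

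\textbf{Items (i) and (iii).} For (i), take determinants in $A=U\Sigma W^{T}$ and use $|\det U|=|\det W|=1$. For (iii), note that $A^{-1}=W\Sigma^{-1}U^{T}$. Its singular values are therefore the $1/\sigma_i$, and (ii) gives $\|A^{-1}\|_2=1/\sigma(A)$.

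\textbf{Item (v).} A submatrix is $A_I = R A C$, where $R$ and $C$ are coordinate selection matrices with operator norm at most $1$. So (v) follows from (vi).

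\textbf{Item (vi).} This is the usual chain $|BCx|\le \|B\|_2|Cx|\le\|B\|_2\|C\|_2|x|$.

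\textbf{Item (vii).} I bound the operator norm by the Frobenius norm. With $D=A-A'$ and a unit vector $x$, Cauchy--Schwarz gives
\[
|Dx| = \Bigl|\sum_j x_j D_j\Bigr| \le \sum_j |x_j||D_j| \le \Bigl(\sum_j|D_j|^2\Bigr)^{1/2}.
\]

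\textbf{Item (viii).} This requires the only nontrivial step, a Neumann series argument.
\begin{enumerate}
\item Write $A' = A(I - A^{-1}(A-A'))$ and set $E = A^{-1}(A-A')$. By (vi), $\|E\|_2\le \|A^{-1}\|_2\|A-A'\|_2 =: q <1$.
\item Since $q<1$, the series $\sum_{j\ge0} E^j$ converges to $(I-E)^{-1}$, with norm at most $1/(1-q)$. Hence $\|(A')^{-1}\|_2\le \|A^{-1}\|_2/(1-q)$.
\item The identity $A^{-1}-(A')^{-1} = A^{-1}(A'-A)(A')^{-1}$ together with (vi) then yields
\[
\|A^{-1}-(A')^{-1}\|_2 \le \frac{\|A^{-1}\|_2^2\|A-A'\|_2}{1-q},
\]
which is exactly the bound claimed in (viii).
\end{enumerate}

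The only step needing care is (viii), where the convergence of the Neumann series relies on the hypothesis $q<1$. All other items are immediate consequences of the SVD.
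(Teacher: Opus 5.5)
Your proposal is correct and follows the paper's approach: the paper only cites (i)--(vii) as standard facts (Horn--Johnson), where you give short SVD-based derivations, and for (viii) it uses the same Neumann-series bound $\|(A')^{-1}\|_2\le \|A^{-1}\|_2/(1-q)$ followed by the identity $A^{-1}-(A')^{-1}=A^{-1}(A'-A)(A')^{-1}$. The only cosmetic difference is that the paper sets $E=A'-A$ and expands $(I+A^{-1}E)^{-1}$, whereas you put $A^{-1}$ inside $E$.
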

\begin{proof}
    Results (i)--(vii) are very standard facts; see, for example, \cite{HornJohnson1985MatrixAnalysis}. Since (viii) is  less immediate, we include a short proof for completeness. Let $E = A' - A$. Then $A' = A + E = A(I + A^{-1}E)$. By assumption, we have $\|A^{-1}E\|_2 \leq \|A^{-1}\|_2\|E\|_2 < 1$. Hence we can apply the Neumann series to $I + A^{-1}E = I - (-A^{-1}E)$ and obtain
    \begin{equation*}
        (I + A^{-1}E)^{-1} = \sum_{k = 0}^\infty (-A^{-1}E)^k.
    \end{equation*}
    Therefore,
    \begin{equation*}
        (A')^{-1} = (I + A^{-1}E)^{-1}A^{-1} = \left(\sum_{k = 0}^\infty (-A^{-1}E)^k\right)A^{-1}.
    \end{equation*}
    Taking norms and again using the bound $\|A^{-1}E\|_2 \leq \|A^{-1}\|_2\|E\|_2$, we get
    \begin{equation}\label{eq:perturbationBound}
        \|(A')^{-1}\|_2 \leq \sum_{k = 0}^\infty \|A^{-1}E\|_2^k \|A^{-1}\|_2
        = \frac{\|A^{-1}\|_2}{1 - \|A^{-1}E\|_2}
        \leq \frac{\|A^{-1}\|_2}{1 - \|A^{-1}\|_2 \|A - A'\|_2}.
    \end{equation}
    Finally, using the identity $A^{-1} - (A')^{-1} = A^{-1}(A' - A)(A')^{-1} = A^{-1}E(A')^{-1}$, we have
    \begin{equation*}
        \|A^{-1} - (A')^{-1}\|_2 \leq \|A^{-1}\|_2\|E\|_2\|(A')^{-1}\|_2.
    \end{equation*}
    Substituting the bound for $\|(A')^{-1}\|_2$ from \eqref{eq:perturbationBound} and recalling that $E = A' - A$ completes the proof.
\end{proof}

In the proofs below, we write $I \in \binom{[n]}{k}$ to indicate that $I \subseteq [n]$ and $|I|=k$. We now assemble the ingredients developed so far into the following lemma, which will be used to build up to our Grassmannian version of Lemma \ref{lem:CaseLutz}.

\begin{lem}\label{lem:goodBasis}
    Let $V \in \G(n,k)$ and $B\subseteq\mathbb{N}$. Then there exists a basis matrix $A \in \R^{n \times k}$ of $V$ (i.e. $A$'s columns form a basis of $V$) and a constant $\widehat{C}_{n,k} > 0$ such that $\|A\|_2 \leq 1$, $\sigma(A) \geq \widehat{C}_{n,k}$, and \begin{equation*}
        K^B_r(p_1,..,p_k\mid V) \leq O_{n,k}(\log r).
    \end{equation*} where $p_i$'s are columns of $A$.
\end{lem}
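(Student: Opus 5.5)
Take $A$ to consist of columns of the projection matrix itself. By Observation~\ref{obs:singularValueLowerBound}, for the given $V$ there is an index set $I=\{i_1<\cdots<i_k\}\in\binom{[n]}{k}$ with $\sigma(Ve_{i_1},\ldots,Ve_{i_k})\ge \widehat{C}_{n,k}$, where we identify $V$ with $P_V$. Set $A=(P_Ve_{i_1},\ldots,P_Ve_{i_k})$, so $p_j=P_Ve_{i_j}$ is the $i_j$-th column of $P_V$.

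\textbf{The two norm conditions.} Each $p_j$ lies in $V$. Since $\sigma(A)\ge\widehat{C}_{n,k}>0$, $A$ has rank $k$, so its columns form a basis of $V$. The upper bound follows from Lemma~\ref{lem:linearAlgebra}(v): $A$ is a column submatrix of $P_V$, so $\|A\|_2\le\|P_V\|_2=1$ because $P_V$ is an orthogonal projection. The lower bound $\sigma(A)\ge\widehat{C}_{n,k}$ holds by the choice of $I$.

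\textbf{The complexity bound.} Given any rational projection matrix $Q$ with $\rho(Q,V)=\|Q-P_V\|_2<2^{-r}$, a machine outputs the columns $Qe_{i_1},\ldots,Qe_{i_k}$, with $I$ hardcoded in $O_{n,k}(1)$ bits. Then
\[
|Qe_{i_j}-p_j|\le\|Q-P_V\|_2<2^{-r}
\]
for each $j$. This places the tuple within $\sqrt{k}\,2^{-r}$ of $(p_1,\ldots,p_k)$ in $\mathbb{R}^{nk}$, which is precision $r-O(1)$. To pass to precision $r$, one either requests $V$ at precision $r+c$ (the conditional definition fixes the conditioning precision) or applies the argument of Lemma~\ref{lem:CaseLutz} to recover the remaining $O(1)$ bits at a cost of $O_{n,k}(1)$ bits of advice. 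The program must also work uniformly for every admissible $Q$, because the conditional complexity takes a max over $q$; it does, since it is just column extraction. Specifying $r$ costs $O(\log r)$, so
\[
K^B_r(p_1,\ldots,p_k\mid V)\le O_{n,k}(\log r).
\]

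\textbf{Main obstacle.} The only delicate point is the precision bookkeeping between $K_{r,r}$ and $K_{r,r+c}$, that is, absorbing the constant loss from $\sqrt{k}$ and $\widehat{C}_{n,k}$ into the $O(\log r)$ term. This is routine given Lemma~\ref{lem:CaseLutz}, or with a constant shift in precision handled as in the other lemmas of \cite{fiedler2025extensionsUnions}.
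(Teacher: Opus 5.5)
Your proposal is correct and follows essentially the same route as the paper: take $A$ to be the columns of the projection matrix indexed by the set $I$ from Observation~\ref{obs:singularValueLowerBound}, and compute approximations by extracting the same columns from a rational approximation $\tilde V$. Your write-up is slightly tidier than the paper's in two respects: you apply the Observation and the norm bound to $P_V$ itself, which is where the lemma's conclusions live, rather than to $\tilde V$, and you explicitly account for the $\sqrt{k}$ precision loss that the paper glosses over.
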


This preliminary lemma shows that, given a $k$-plane, one can compute a basis matrix whose norm and singular values are uniformly well controlled.

    \begin{proof}
        It is straightforward to construct a Turing Machine $M(\tilde{V}, I)$, where $\tilde{V} \in \G(n,k) \cap \Q^{n \times n} \cap B_{2^{-r}}(V)$ and $I \in \binom{[n]}{k}$  as follows: \begin{itemize}
            \item For all $i \in I$, compute $q_i = \tilde{V}e_i \in \Q^n$.
            \item Return the $q_i$'s.
        \end{itemize}
        By Observation \ref{obs:singularValueLowerBound}, we can find some $I \in \binom{[n]}{k}$ and constant $\widehat{C}_{n,k}$ such that $\sigma(\tilde{A}) \geq \widehat{C}_{n,k}$, where $\tilde{A}$ is the column matrix formed by $q_i$'s. Note that the length of $I$ depends only on $n$ and $k$. Also note that $\tilde{V}$ is an orthogonal projection matrix, so $\|\tilde{V}\|_2 = 1$, which implies\begin{equation*}
            \|\tilde{A}\|_2 = \|\tilde{V}E_I\|_2 \leq \|\tilde{V}\|_2\|E_I\|_2 =1.
        \end{equation*} where $E_I$ is a submatrix of $I_n$ so it also has spectral norm $1$.

        Then, fix $i \in I$. Let $p_i = Ve_i$ and $q_i = \tilde{V}e_i$. By definition of the metric $\rho$, $|e_i| = 1$ implies that  \begin{equation*}
            |p_i - q_i| \leq \rho(V,\tilde{V})  \leq 2^{-r}.
        \end{equation*}
    \end{proof}

To approximate the basis $A' = A\,A_I^{-1}$ defined earlier, we must first choose a ``good'' submatrix $A_I$ whose inverse is well-controlled. 
With the basis matrix provided by the previous lemma, the next lemma guarantees the existence of such a choice of $I$. 
\begin{lem}\label{lem:goodSubmatrix}
 Let $A \in \R^{n \times m}$ have full column rank with $n \geq  m$. Suppose $\|A\|_2 \leq K_1$ and $\sigma(A) \geq K_2 > 0$, where $\sigma$ denotes the smallest singular value. Then there is an invertible $m\times m$ submatrix $A_I$ such that $\|A_I^{-1}\|_2 \leq \frac{\sqrt{\binom{n}{m}}K_1^{m-1}}{K_2^m}$
\end{lem}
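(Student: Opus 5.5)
We will use the Cauchy--Binet formula to choose $I$ and then bound $\|A_I^{-1}\|_2$ through its singular values. By Cauchy--Binet applied to $A^TA$,
\begin{equation*}
\det(A^TA)=\sum_{I\in\binom{[n]}{m}} \det(A_I)^2 .
\end{equation*}
By Lemma \ref{lem:linearAlgebra}(i), applied to the $m\times m$ matrix $A^TA$ whose singular values are $\sigma_i(A)^2$, we have $\det(A^TA)=\prod_{i=1}^m\sigma_i(A)^2\geq K_2^{2m}$. The sum has $\binom{n}{m}$ terms, so pigeonhole gives some $I$ with
\begin{equation*}
|\det(A_I)|\geq \frac{K_2^m}{\sqrt{\binom{n}{m}}}>0 .
\end{equation*}
In particular $A_I$ is invertible, and this is the $I$ we take.

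Next we convert the determinant lower bound into a lower bound on $\sigma(A_I)$. Write $\sigma_1\geq\cdots\geq\sigma_m=\sigma(A_I)$ for the singular values of $A_I$. By Lemma \ref{lem:linearAlgebra}(ii) and (v), $\sigma_1=\|A_I\|_2\leq\|A\|_2\leq K_1$, so each of $\sigma_1,\dots,\sigma_{m-1}$ is at most $K_1$. Then Lemma \ref{lem:linearAlgebra}(i) gives
\begin{equation*}
\frac{K_2^m}{\sqrt{\binom{n}{m}}}\leq |\det(A_I)|=\prod_{i=1}^m\sigma_i\leq K_1^{m-1}\sigma(A_I),
\end{equation*}
so $\sigma(A_I)\geq K_2^m/\bigl(\sqrt{\binom{n}{m}}K_1^{m-1}\bigr)$. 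Finally, Lemma \ref{lem:linearAlgebra}(iii) yields $\|A_I^{-1}\|_2=1/\sigma(A_I)\leq \sqrt{\binom{n}{m}}K_1^{m-1}/K_2^m$, which is the stated bound.

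The main step is obtaining a lower bound on $|\det(A_I)|$ for some $I$. Observation \ref{obs:singularValueLowerBound} gives lower bounds on the singular values of the whole $A$, not of a square submatrix, so it cannot be used directly. Cauchy--Binet is what passes from the whole matrix to a square row submatrix, with the loss factor $\sqrt{\binom{n}{m}}$. The rest is routine bookkeeping with Lemma \ref{lem:linearAlgebra}. The edge case $m=1$ needs no special treatment, since then $K_1^{0}=1$ and the bound reads $|a_i|\geq K_2/\sqrt{n}$ for some entry $a_i$.
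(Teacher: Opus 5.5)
Your proposal is correct and follows essentially the same route as the paper. Both use Cauchy--Binet with pigeonhole to get some $I$ with $|\det(A_I)|\ge K_2^m/\sqrt{\binom{n}{m}}$, then bound $|\det(A_I)|\le \sigma(A_I)\|A_I\|_2^{m-1}\le \sigma(A_I)K_1^{m-1}$, and conclude via $\|A_I^{-1}\|_2=1/\sigma(A_I)$.
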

\begin{proof}
    Applying the Cauchy-Binet Formula, we have \begin{equation*}
        \det(A^TA) = \sum_{S \in \binom{[n]}{m}}\det(A_S)^2
    \end{equation*}
    So we can find some index set $I\in \binom{[n]}{m}$ such that \begin{equation*}
        \det(A_I)^2 \geq \frac{1}{\binom{n}{m}}\det(A^TA)
    \end{equation*}
    And we also have \begin{equation*}
        \det(A^TA)^{1/2} = \prod_{i = 1}^m \sigma_i(A) \geq \sigma(A)^m \geq K_2^m > 0
    \end{equation*}
    So we get \begin{equation} \label{AJ-detBound}
        |\det(A_I)|\geq \frac{1}{\sqrt{\binom{n}{m}}}K_2^m > 0
    \end{equation}
    On the other hand , since $A_I$ is non-singular, we then have \begin{equation}\label{AJ-svBound}
        |\det(A_I)| = \prod_{i = 1}^m \sigma_i(A_I) \leq \sigma(A_I)\sigma_{\max}(A_I)^{m-1} = \sigma(A_I)\|A_I\|_2^{m-1}
    \end{equation}
    Therefore, we can get \begin{align*}
        \|A_I^{-1}\|_2 & = \frac{1}{\sigma(A_I)} && \text{[Lemma \ref{lem:linearAlgebra} (iii)]}\\
        & \leq \frac{\|A_I\|_2^{m-1}}{|\det(A_I)|} &&\text{[inequality \eqref{AJ-svBound}]} \\
        &\leq  \frac{\|A\|_2^{m-1}}{|\det(A_I)|} && \text{[Lemma \ref{lem:linearAlgebra} (v)]}\\
        &\leq \frac{\sqrt{\binom{n}{m}}K_1^{m-1}}{K_2^m} && \text{[inequality \eqref{AJ-detBound} ]}
    \end{align*}
\end{proof}

 We have already shown that there exists a ``good'' submatrix $A_I$. It just remains to carry out some estimates and verify that the new basis $A' := A\cdot A_I^{-1}$ can be well approximated from an approximation of the underlying $k$-plane.

\begin{lem}\label{lem:coordinates}
    Let $V\in G(n,k)$ and $B\subseteq\mathbb{N}$. Then there exists a basis matrix $A'\in \R^{n\times k}$ whose columns $A'_1,\ldots,A'_k$ form a basis of $V$ and an index set $I\in\binom{[n]}{k}$ such that the $k\times k$ row submatrix satisfies $A'_I = I_k$. Moreover, 
    \begin{equation*}
    K^B_r(A'_1,\ldots,A'_k )\ =\ K^B_r(V) \pm O_{n,k}(\log r).
    \end{equation*}
\end{lem}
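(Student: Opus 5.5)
We prove the two inequalities separately. The basis we use is $A' := A A_I^{-1}$, where $A$ is the basis matrix of Lemma~\ref{lem:goodBasis} and $I$ is the index set from Lemma~\ref{lem:goodSubmatrix}.

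\textbf{Setup.} By Lemma~\ref{lem:goodBasis}, the basis $A=(p_1,\dots,p_k)$ with $p_i=Ve_i$ satisfies $\|A\|_2\le 1$ and $\sigma(A)\ge \widehat{C}_{n,k}$. Applying Lemma~\ref{lem:goodSubmatrix} with $K_1=1$ and $K_2=\widehat{C}_{n,k}$ gives $I$ with
\[
\|A_I^{-1}\|_2\le C_{n,k}:=\sqrt{\tbinom{n}{k}}\,\widehat{C}_{n,k}^{-k}.
\]
The columns of $A'$ span $V$ and $A'_I=I_k$, as noted before Observation~\ref{obs:singularValueLowerBound}.

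\textbf{Upper bound $K^B_r(A'_1,\dots,A'_k)\le K^B_r(V)+O(\log r)$.} Let $M$ be a machine that takes a rational $\tilde V\in B_{2^{-r-c}}(V)$ together with the index sets (which cost $O_{n,k}(1)$ bits). It forms $\tilde A=\tilde V E_J$, where $J$ is the index set from Lemma~\ref{lem:goodBasis}, and then outputs $\tilde A\tilde A_I^{-1}$ in exact rational arithmetic.

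Every column of $\tilde A$ lies within $2^{-r-c}$ of the corresponding column of $A$. By Lemma~\ref{lem:linearAlgebra}(vii) and (v), $\|\tilde A_I-A_I\|_2\le\sqrt{k}\,2^{-r-c}$. For $r$ large this makes $\|A_I^{-1}\|_2\|A_I-\tilde A_I\|_2<1/2$, so Lemma~\ref{lem:linearAlgebra}(viii) gives $\|\tilde A_I^{-1}-A_I^{-1}\|_2\le 2C_{n,k}^2\sqrt{k}\,2^{-r-c}$. Writing
\[
\tilde A\tilde A_I^{-1}-AA_I^{-1}=(\tilde A-A)\tilde A_I^{-1}+A(\tilde A_I^{-1}-A_I^{-1})
\]
and applying Lemma~\ref{lem:linearAlgebra}(vi), the error is $O_{n,k}(2^{-r-c})$. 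Choosing the constant $c=c_{n,k}$ large makes this error less than $2^{-r}$ in each column. Finally, replacing precision $r+c$ by $r$ costs only $O(\log r)$ by the Grassmannian analogue of $K_{r+c}\le K_r+O(1)$. That analogue follows by covering: a $2^{-r}$-ball in $\G(n,k)$ is covered by $O_{n,k}(1)$ balls of radius $2^{-r-c}$. Alternatively, we can avoid it by using the $\tilde V$ given at precision $r$ and losing only a constant factor in the error.

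\textbf{Lower bound $K^B_r(V)\le K^B_r(A'_1,\dots,A'_k)+O(\log r)$.} This is the main obstacle: from approximations $q_i$ of the columns $A'_i$ we must produce a rational orthogonal projection close to $V$. The entries of $A'$ are bounded, since $\|A'\|_2\le\|A\|_2\|A_I^{-1}\|_2\le C_{n,k}$. Moreover $\sigma(A')\ge 1$ because $A'_I=I_k$. So the columns form a uniformly non-degenerate spanning set of $V$.

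We apply Lemma~\ref{lem:pointsDeterminePlanes} with $p_0=0$ and $p_i=A'_i$, whose smallest singular value is $\ge 1$ and $|p_0|=0$. This gives
\[
K^B_r(V,0,A'_1,\dots,A'_k)\le K^B_r(0,A'_1,\dots,A'_k)+O_{n,k}(\log r).
\]
Since the implied constant depends only on $n,k$ and the bound $\sigma\ge1$, we obtain the claim uniformly in $V$. The point $0$ costs $O(1)$, and dropping the extra points on the left only decreases complexity.

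One caveat: Lemma~\ref{lem:pointsDeterminePlanes} requires $r$ large depending on the points. I expect this dependence to enter only through $\sigma$ and the norms, which we control uniformly. If that fails, the fallback is a direct computation. Take $\tilde P=Q(Q^TQ)^{-1}Q^T$, where $Q$ is formed from the $q_i$, and bound $\|\tilde P-P_V\|_2$ using Lemma~\ref{lem:linearAlgebra}(viii) with $\sigma(Q^TQ)\ge 1/2$. The result $\tilde P$ may be irrational only through rounding, so we round it to a nearby rational projection; this is done exactly as in the rational-density argument defining $K_r$ on $\G(n,k)$.
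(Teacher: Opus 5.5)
Your proposal is correct and follows the paper's proof essentially step for step: the same basis $A'=AA_I^{-1}$, the same machine computing $\tilde A\tilde A_I^{-1}$ from a rational $\tilde V$ with the perturbation bounds of Lemma~\ref{lem:linearAlgebra}(v)--(viii), and the same lower bound via Lemma~\ref{lem:pointsDeterminePlanes} using $\sigma(A')\ge 1$.

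Of your two ways of absorbing the constant precision loss, use the second one: feed $\tilde V$ at precision $r$ and then apply Lemma~\ref{lem:CaseLutz} to the columns of $A'$, which is exactly what the paper does. The first one is weaker because the Grassmannian bound $K_{r+c}(V)\le K_r(V)+O(\log r)$ is Lemma~\ref{lem:grassmannianCaseLutz}, which the paper derives from this very lemma, so your covering argument would need its own effective net. Also, in your fallback, $Q(Q^TQ)^{-1}Q^T$ is already a rational projection when $Q$ is rational, so no rounding step is needed.
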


We make the following remarks on this lemma. First, it shows that this basis is an effective characterization of a $k$-plane. Moreover, since $A'_I = I_k$, the matrix $A'$ can be identified with a vector in $\R^{k(n-k)}$, which matches the dimension of the Grassmannian $\G(n,k)$. In this sense, $A'$ serves as a ``coordinate" representation of a $k$-plane, allowing us to transfer results of Kolmogorov complexity for real vectors to $k$-planes.

\begin{proof}[Proof of Lemma \ref{lem:coordinates}]
We first show the upper bound.
Given $V \in \G(n,k)$, let $A$ denote the basis in Lemma \ref{lem:goodBasis}. Then we can apply Lemma \ref{lem:goodSubmatrix} to obtain an invertible submatrix $A_I$. Let $A' = A\cdot A_I^{-1}$
    We construct a Turing Machine $M(\tilde{V},I,J)$, where $\tilde{V} \in \Q^{n \times n}\cap \G(n,k) \cap B_{2^{-r}}(V)$, as follows: \begin{itemize}
        \item Use the Turing Machine in Lemma \ref{lem:goodBasis} with inputs $(\tilde{V},J)$ to compute $\tilde{A}$.
        \item Compute $\tilde{A}_I^{-1}$.
        \item Output $\tilde{A}' = \tilde{A} \cdot \tilde{A}_I^{-1}$
    \end{itemize}
    We first need to check that $\tilde{A}_I$ is invertible.  By Lemma \ref{lem:goodBasis}, we know that for each column $j$, $\|A_j -\tilde{A}_j\|_2 \leq 2^{-r}$. Then, we apply Lemma \ref{lem:linearAlgebra} (vii) and get that 
    \begin{equation}\label{A-distance}
     \|A - \tilde{A}\|_2 \leq \sqrt{\sum_{j \in [k]}\|A_j - \tilde{A}_j\|_2^2} \leq \sqrt{k}2^{-r}.
     \end{equation}
    By Lemma \ref{lem:linearAlgebra} (v), we have
     \begin{equation}\label{eq:AJ-distance}
         \|A_I-\tilde{A}_I\|_2\leq \|A - \tilde{A}\|_2 \leq \sqrt{k}2^{-r}.
     \end{equation}
    By Lemma \ref{lem:goodBasis}, $\|A\|_2 \leq 1$ and $\sigma(A) \geq \widehat{C}_{n,k}$. With these two bounds, we apply Lemma \ref{lem:goodSubmatrix} and get that  \begin{equation}\label{eq:AJinverse-normBound}
         \|A_I^{-1}\|_2\leq \frac{\sqrt{\binom{n}{k}}}{(\widehat{C}_{n,k})^k}
     \end{equation}
     which, by Lemma \ref{lem:linearAlgebra} (iii), implies \begin{equation}\label{eq:AI-singularValue}
         \sigma(A_I) = \frac{1}{\|A_I^{-1}\|_2} \geq \frac{(\widehat{C}_{n,k})^k}{\sqrt{\binom{n}{k}}}.
     \end{equation}
      We use the fact that for a matrix $M$, the map $M\mapsto \sigma(M)$ is $1$-Lipschitz with respect to the spectral norm, which gives \begin{equation*}
         |\sigma(A_I) -\sigma(\tilde{A_I})| \leq \|A_I - \tilde{A_I}\|_2  \leq \sqrt{k}2^{-r}. 
     \end{equation*}
      So when $r$ is sufficiently large, by \eqref{eq:AI-singularValue} and previous inequality,  we have \begin{equation*}
        \sigma( \tilde{A_I}) \geq \sigma (A_I) - |\sigma(A_I) -\sigma(\tilde{A_I})|  \geq  \frac{(\widehat{C}_{n,k})^k}{\sqrt{\binom{n}{k}}} - \sqrt{k}2^{-r} > 0
     \end{equation*}
     So $\tilde{A}_I$ is indeed invertible. This allows us to define $\tilde{A}' = \tilde{A}\cdot \tilde{A}_I^{-1}$. 
     
     Now, we show that we can approximate each $A'_j$, i.e. that $\|A_j' - \tilde{A}_j'\|_2$ is well-controlled. First, for $r$ sufficiently large, by \eqref{eq:AJ-distance} and \eqref{eq:AJinverse-normBound}, we have \begin{equation*}
         \|A_I^{-1}\|_2 \|A_I - \tilde{A}_I\| _2 \leq \frac{\sqrt{\binom{n}{k}}}{(\widehat{C}_{n,k})^k} \sqrt{k}2^{-r} < 1/2 < 1
     \end{equation*}
     then by \eqref{inverseDistanceBound}, we have \begin{align}\label{AJinverse-distance}
         \|A_I^{-1} - \tilde{A}_I^{-1}\|_2 \leq \frac{\|A_I^{-1}\|_2^2 \|\tilde{A}_I - A_I\|_2}{1 - \|A_I^{-1}\|_2\|\tilde{A}_I - A_I\|_2}\leq  \frac{2\sqrt{k}\binom{n}{k}}{(\widehat{C}_{n,k})^{2k}}2^{-r}.
     \end{align}
     Then for each column $j$, by the triangle inequality and Lemma \ref{lem:linearAlgebra} (vi), we have \begin{align*}
     \|A'_j - \tilde{A}_j' \|_2 &= \|A(A_I^{-1})_j - \tilde{A} (\tilde{A}_I^{-1})_j\|_2\\
     & \leq \|(A-\tilde{A})(A_I^{-1})_j\|_2 + \|\tilde{A}((A_I^{-1})_j - (\tilde{A}_I^{-1})_j)\|_2\\
     &\leq\|A-\tilde{A}\|_2 \|(A_I^{-1})_j \|_2 + \|\tilde{A}\|_2 \|(A_I^{-1})_j - (\tilde{A}_I^{-1})_j\|_2.
    \end{align*}
    Using the triangle inequality again, together with Lemma \ref{lem:linearAlgebra} (v) and inequality \eqref{A-distance}, we then have \begin{equation*}
         \|A'_j - \tilde{A}_j' \|_2 \leq 
         \sqrt{k}2^{-r}\|(A_I^{-1})\|_2 + (\|A\|_2  + \|A - \tilde{A}\|_2)\|A_I^{-1} - \tilde{A}_I^{-1}\|_2.
    \end{equation*}
    Note that inequality \eqref{A-distance}, the norm bound on $A$, and the fact that $2^{-r} \leq 1$ imply \begin{equation*}
        \|A\|_2  + \|A - \tilde{A}\|_2 \leq 1 + \sqrt{k}2^{-r} \leq 1 + \sqrt{k}.
    \end{equation*}
    Then by inequality \eqref{A-distance} and \eqref{eq:AJinverse-normBound}, we have   
    \begin{align*}
         \|A'_j - \tilde{A}_j' \|_2 
         &\leq  \sqrt{k}2^{-r}\|(A_I^{-1})\|_2 + (1 + \sqrt{k})\|A_I^{-1} - \tilde{A}_I^{-1}\|_2\\ 
         & \leq \left(\frac{\sqrt{k\binom{n}{k}}}{(\widehat{C}_{n,k})^k} + (1  + \sqrt{k})\frac{2\sqrt{k}\binom{n}{k}}{(\widehat{C}_{n,k})^{2k}}\right)2^{-r}.
    \end{align*}
To recap, we have shown that a precision $r$ approximation of $V$ and little additional information is sufficient to compute a nearly precision $r$ approximation of each $A^\prime_j$. Note that the above constant depends only on $n$ and $k$. Hence, applying Lemma \ref{lem:CaseLutz} to each $A_j'$ gives us the desired upper bound. 

For the lower bound on $K^B_r(A'_1,\ldots,A'_k )$, by the fact that $A'_I = I_k$, we have\begin{equation*}
    \|A'x\|_2 \geq \|(A'x)_I\|_2 = \|A'_Ix\|_2 = \|x\|_2.
\end{equation*}
So \begin{equation*}
    \min_{\|x\|_2 = 1}\|A'x\| \geq 1.
\end{equation*}
Then by Lemma \ref{lem:linearAlgebra} (iv), \begin{equation*}
    \sigma(A') \geq 1.
\end{equation*}
Hence, the collection of vectors $A'_j\in V$ is suitably non-singular, so we can apply Lemma \ref{lem:pointsDeterminePlanes} and get the desired lower bound (again depending only on $n$ and $k$).
\end{proof}

We now give a direct application of our characterization by extending Lemma \ref{lem:CaseLutz} from real vectors to $k$-planes. The original statement of Case and Lutz \cite{case2015dimension} gives a sharper error term, but its proof requires a number of delicate estimates. Since our eventual argument proceeds via the point-to-set principle, an $O(\log r + \log s)$ error term is sufficient for our purposes, which allows for a considerably simplified proof.

The idea is straightforward: we just apply Lemma~\ref{lem:CaseLutz} to each free entry of the matrix $A'$ computed from the given $k$-plane $V$. 
\begin{lem}\label{lem:grassmannianCaseLutz}
    Let $V\in \mathcal{G}(n, k)$ and $B\subseteq\mathbb{N}$. Then,
    \begin{equation*}
        K^B_{r+s}(V) \leq K_r(V) + k(n-k)s + O_{n, k}(\log s + \log r)
    \end{equation*}
\end{lem}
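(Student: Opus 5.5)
The plan is to move to the coordinate representation of Lemma~\ref{lem:coordinates}, apply Lemma~\ref{lem:CaseLutz} in $\R^{k(n-k)}$, and transfer back. Note that the right-hand side has $K_r(V)$ without the oracle. Since $K^B_{r+s}(V)\le K_{r+s}(V)+O(1)$ by Lemma~\ref{lem:conditionalAndOracle} (extra oracles only help), it suffices to prove the unrelativized bound. I will use the same argument verbatim relative to $B$ if the intended reading is $K^B_r(V)$ on the right.

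\textbf{Choosing coordinates at both precisions.} The index set $I$ in Lemma~\ref{lem:coordinates} depends only on $V$, not on the precision, and the basis $A'$ is the same at all precisions. So apply Lemma~\ref{lem:coordinates} to $V$ at both precisions $r$ and $r+s$. This gives a single matrix $A'$ with $A'_I=I_k$ and
\[
K_{t}(A'_1,\ldots,A'_k)=K_t(V)\pm O_{n,k}(\log t)\quad\text{for } t\in\{r,r+s\}.
\]
Let $x\in\R^{k(n-k)}$ be the vector of entries of $A'$ in the rows outside $I$. Since the rows in $I$ are the fixed identity, knowing $x$ at precision $t$ gives each $A'_j$ at precision $t-O_{n,k}(1)$, and conversely. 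The finite set $I$ costs $O_{n,k}(1)$ bits, and the constant loss in precision costs $O_{n,k}(1)$ bits by Lemma~\ref{lem:CaseLutz}. Hence $K_t(A'_1,\ldots,A'_k)=K_t(x)\pm O_{n,k}(\log t)$.

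\textbf{Chaining the estimates.} Lemma~\ref{lem:CaseLutz} in $\R^{k(n-k)}$ gives
\[
K_{r+s}(x)\le K_r(x)+k(n-k)s+O(\log r+\log s).
\]
Combining this with the two comparisons above yields
\[
K_{r+s}(V)\le K_{r+s}(x)+O(\log(r+s))\le K_r(x)+k(n-k)s+O(\log r+\log s)\le K_r(V)+k(n-k)s+O_{n,k}(\log r+\log s),
\]
using $\log(r+s)\le \log r+\log s+1$.

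\textbf{Main obstacle.} The delicate point is the ``sufficiently large $r$'' hypotheses hidden in Lemma~\ref{lem:coordinates}, namely the invertibility of $\tilde A_I$ and the application of Lemma~\ref{lem:pointsDeterminePlanes}. The thresholds depend only on $n$ and $k$, because $\widehat{C}_{n,k}$ is uniform and $\sigma(A')\ge1$. Also, $|p_0|$ is bounded, since $A'$ has entries bounded by $\|A_I^{-1}\|_2$, which Lemma~\ref{lem:goodSubmatrix} controls uniformly. So for small $r$ the inequality holds trivially by absorbing everything into the $O_{n,k}(1)$ constant.

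One must also make sure that the lower bound $K_{r+s}(V)\le K_{r+s}(A')+O(\log(r+s))$ (from Lemma~\ref{lem:pointsDeterminePlanes}) and the upper bound $K_r(A')\le K_r(V)+O(\log r)$ are applied in the correct directions. These are exactly the two halves of Lemma~\ref{lem:coordinates}.
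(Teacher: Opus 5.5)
Your proposal is correct and follows essentially the same route as the paper: both apply Lemma \ref{lem:coordinates} at precisions $r$ and $r+s$ with a single coordinate matrix $A'$ (with $A'_I=I_k$), and then apply Lemma \ref{lem:CaseLutz} only to the $k(n-k)$ free entries outside the rows indexed by $I$. The only cosmetic difference is that the paper runs the argument relative to $B$ throughout (which also yields the stated bound), whereas you prove the unrelativized inequality and then drop the oracle via Lemma \ref{lem:conditionalAndOracle}; your explicit vector $x\in\R^{k(n-k)}$ and your remarks on uniform precision thresholds tighten details the paper leaves implicit.
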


\begin{proof}
    Fix \(V \in \G(n,k)\). By Lemma \ref{lem:coordinates} (relativized to $B$), there exist \(A' \in \R^{n \times k}\) with \(A'_I = I_k\) for some \(I \in \binom{[n]}{k}\), such that 
    \begin{equation*}
        K^B_{r+s}(V) \leq K^B_{r+s}(A'_1, \dots, A'_k) + O(\log r + \log s),
    \end{equation*}
    and
    \begin{equation*}
        K^B_r(A'_1, \dots, A'_k) \leq K^B_r(V) + O(\log r),
    \end{equation*}
    where \(A'_1, \dots, A'_k\) are the columns of \(A'\).
    
    Now apply Lemma \ref{lem:CaseLutz} entrywise to the matrix \(A'\). Since \(A'_I = I_k\), the \(k^2\) entries in the rows indexed by \(I\) are fixed rationals, so only the remaining \(k(n-k)\) entries need to be refined from precision \(r\) to precision \(r+s\). Therefore Lemma \ref{lem:CaseLutz} gives
    \begin{equation*}
        K^B_{r+s}(A'_1, \dots, A'_k)
        \leq
        K^B_r(A'_1, \dots, A'_k) + k(n-k)s + O(\log r + \log s).
    \end{equation*}
    Combining the above inequalities completes the proof.
\end{proof}

\noindent \textit{Remark.} This lemma's proof can be modified slightly to obtain an analogous version for affine $k$-planes. Specifically, if $P \in \mathcal{A}(n,k)$, then we can write $P = V + t$ for some $V \in \mathcal{G}(n,k)$ and $t \in \R^n$. One then applies this lemma to $V$ and Lemma \ref{lem:CaseLutz} to $t$.

To end this section, we state and prove a geometric lemma. The intuition behind it is as follows: for $W \in \G(n,k_2)$, we know $W \cong \R^{k_2}$. Then, (working with access to $W$) any $k_1$-plane $V\subseteq W$ can be viewed as living in the Grassmannian $\G(k_2,k_1)$, whose dimension is $k_1 (k_2-k_1)$.
\begin{lem}\label{lem:planesInsidePlanes}
    Let $V\in \mathcal{G}(n, k_1)$ and $W\in \mathcal{G}(n, k_2)$. Assume $k_1<k_2$ and $V\subset W$. Then
    \begin{equation*}
        K^B_r(V\mid W)\leq k_1(k_2 - k_1) r + O(\log r).
    \end{equation*}
\end{lem}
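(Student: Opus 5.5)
Given a precision-$r$ approximation of $W$, I will first compute an orthonormal basis of $W$ and then describe $V$ inside that frame by the coordinate system of Lemma~\ref{lem:coordinates}, now applied in $\G(k_2,k_1)$. Concretely, by Lemma~\ref{lem:goodBasis} applied to $W$ (with $k_2$ in place of $k$), conditional on $W$ at precision $r$ we can compute, with $O(\log r)$ extra bits specifying the index set, a well-conditioned basis $w_1,\dots,w_{k_2}$ of $W$, with $\|\cdot\|_2\le 1$ and smallest singular value at least $\widehat C_{n,k_2}$. Applying Gram--Schmidt (or taking $W_b(W_b^TW_b)^{-1/2}$, where $W_b$ is the matrix with columns $w_i$) gives an orthonormal basis $u_1,\dots,u_{k_2}$. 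This is a Lipschitz operation on matrices with singular values bounded below, so a $2^{-r}$-approximation of the $w_i$ gives an $O_{n,k_2}(2^{-r})$-approximation of the $u_i$. Write $U\in\R^{n\times k_2}$ for the matrix with columns $u_i$. Then $V=U\widehat V$ for a unique $\widehat V\in\G(k_2,k_1)$, namely $\widehat V=U^TV$, and in terms of projection matrices $P_V=UP_{\widehat V}U^T$.

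Next I encode $\widehat V$. By Lemma~\ref{lem:coordinates} in $\G(k_2,k_1)$, there are an index set $I\in\binom{[k_2]}{k_1}$ and a basis matrix $\widehat A'\in\R^{k_2\times k_1}$ with $\widehat A'_I=I_{k_1}$. Its remaining $k_1(k_2-k_1)$ entries determine $\widehat V$ at precision $r$ up to $O(\log r)$ bits, via the lower-bound direction of that lemma, which uses Lemma~\ref{lem:pointsDeterminePlanes} with $\sigma(\widehat A')\ge 1$. The key quantitative point is that these free entries are bounded: $\widehat A'=\widehat A\widehat A_I^{-1}$, and Lemma~\ref{lem:goodSubmatrix} bounds $\|\widehat A_I^{-1}\|_2$ by a constant depending only on $k_1$ and $k_2$. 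Each free entry is therefore a real in a bounded interval, and specifying it to precision $2^{-r}$ costs $r+O(1)$ bits. Together with a self-delimiting encoding of $r$ (cost $O(\log r)$) and of $I$ (cost $O(1)$), the total description length is $k_1(k_2-k_1)r+O(\log r)$.

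The decoding machine, given a rational $\tilde W$ near $W$ and this program, does the following. It computes $\tilde U$, reconstructs rational $\tilde{\widehat A'}$ from the stored entries, and forms the $n\times k_1$ matrix $\tilde U\tilde{\widehat A'}$, whose columns span an approximation of $V$. It then outputs the projection onto their span, i.e. $M(M^TM)^{-1}M^T$ with $M=\tilde U\tilde{\widehat A'}$. This output can be rounded to a nearby rational projection matrix, as in the constructions of \cite{fiedler2025extensionsUnions}. Since $\sigma(M)\ge\sigma(\tilde{\widehat A'})-O(2^{-r})\ge 1/2$ for large $r$, the map $M\mapsto M(M^TM)^{-1}M^T$ is Lipschitz with a constant depending only on $n,k_1,k_2$. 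The output is therefore within $C2^{-r}$ of $P_V$. Lemma~\ref{lem:grassmannianCaseLutz}, or simply computing everything at precision $r+O(1)$, absorbs the constant at cost $O(1)$ bits.

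I expect the main obstacle to be the error propagation. The conditional complexity takes a maximum over all rational approximations $\tilde W$, so the reconstruction must be stable uniformly. This requires three things: the good index set chosen for $W$ must still give a well-conditioned submatrix for every $\tilde W$ within $2^{-r}$, which follows from the singular-value Lipschitz argument used in the proof of Lemma~\ref{lem:coordinates}; the orthonormalization must be Lipschitz; and the final projection step must be Lipschitz. If orthonormalization proves awkward, I would instead work directly with the well-conditioned (non-orthonormal) basis $w_1,\dots,w_{k_2}$ and represent $V$ by coefficient vectors with respect to it. Bounding those coefficients then uses $\sigma(W_b)\ge\widehat C_{n,k_2}$ in place of orthonormality.
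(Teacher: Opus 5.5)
Your argument is correct, but it takes a genuinely different route from the paper.

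\textbf{The paper's route.} It never builds an orthonormal frame for $W$. It chooses a standard coordinate $k_2$-plane $E$ on which $p_W$ is uniformly well-conditioned (via Observation~\ref{obs:singularValueLowerBound}). It lifts $V$ to the unique $k_1$-plane $V'\subseteq E$ with $p_WV'=V$. Since $V'$ is an element of $\G(k_2,k_1)$ padded with zero rows and columns, Lemma~\ref{lem:grassmannianCaseLutz} gives $K^B_r(V')\le k_1(k_2-k_1)r+O(\log r)$ with no conditioning at all. It then recovers $V$ from $V'$ and $W$ by projecting the vectors $p_{V'}e_j$ onto $W$ and applying Lemma~\ref{lem:pointsDeterminePlanes}. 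This yields $K^B_r(V\mid W)\le K^B_r(V')+O(\log r)$.

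\textbf{Your route.} You compute an isometry $U\colon\R^{k_2}\to W$ from the approximation of $W$ and pull $V$ back to $\widehat V=U^TV\in\G(k_2,k_1)$. You then spend $r+O(1)$ bits on each of the $k_1(k_2-k_1)$ bounded free coordinates of $\widehat V$ from Lemma~\ref{lem:coordinates}.

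\textbf{Trade-offs.} Your approach makes the heuristic $W\cong\R^{k_2}$ literal and gives a fully explicit error analysis. The price is checking Lipschitz stability of orthonormalization and of $M\mapsto M(M^TM)^{-1}M^T$; both are fine for matrices with singular values bounded below and norms bounded above. The paper's approach stays with coordinate planes and rational projection matrices, avoids square roots, and black-boxes the coordinate count. Its price is the lifting step and the conditioning argument for the projected basis.

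\textbf{Two minor remarks.} First, when $M$ is rational, $M(M^TM)^{-1}M^T$ is already an exact rational orthogonal projection of rank $k_1$, so no rounding is needed. Second, working internally at precision $r+O(1)$ cannot remove the $O(2^{-r})$ error inherited from $\tilde W$ itself. The constant-factor loss must therefore be absorbed by a conditional Case--Lutz-type refinement costing $O(1)$ extra bits. The paper does exactly this implicitly when it passes from $|v_j-\bar v_j|\le 2^{-(r-1)}$ to precision $r$.
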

\begin{proof}
First, note that if $V^\prime$ is a subset of any of the standard coordinate $k_2$-planes in $\mathbb{R}^n$, then 
\begin{equation}\label{eq:preliminaryPlanesInsidePlanes}
K^B_r(V^\prime)\leq k_1(k_2 - k_1) r + O(\log r).
\end{equation}
This bound holds because the assumption guarantees that the projection matrix for $V^\prime$ has the form of a projection matrix in $\mathcal{G}(k_2, k_1)$ with $(n-k_2)$ rows and columns consisting entirely of zeros inserted. Expanding any rational projection matrix in this way requires only a constant (depending on $n$ and $k_2$) amount of information, so the upper bound that Lemma \ref{lem:grassmannianCaseLutz} implies for elements of $\mathcal{G}(k_2, k_1)$ also holds for $V^\prime$. 

Now, let $V$ and $W$ be given. Pick a standard coordinate $k_2$-plane that maximizes the smallest nonzero singular value of the projection of its basis elements onto $W$. There exists some $V^\prime$ in this $k_2$ plane such that $p_W V^\prime = V$. Define $v_j=p_W (p_{V^\prime}e_{j})$, where $e_j$ is the $j$th standard coordinate vector in $\mathbb{R}^n$. By our choice of $k_2$-plane and Observation \ref{obs:singularValueLowerBound}, we are guaranteed that some subset $I$ consisting of $k_1$ of these vectors is a basis for $V$ and is such that all of its nonzero singular values are bounded from below by some constant depending only on $n, k_1$, and $k_2$.

Given $2^{-r}$ approximations of $W$ and $V^\prime$ (denoted $\bar{W}$ and $\bar{V^\prime}$) and $I\subseteq[n]$ of cardinality $k_2$, there is a Turing machine that takes the standard basis vectors in $\mathbb{R}^n$ corresponding to $j\in I$, then computes $\bar{v}_j = p_{\bar{W}} (p_{\bar{V^\prime}}e_{j})$ for each. By the definition of the metric on the Grassmannian, $\vert v_j - \bar{v}_j\vert\leq 2^{-(r-1)}$. Hence (with the correct choice of $I$), we may use Lemma \ref{lem:pointsDeterminePlanes} to establish
\begin{equation*}
    K^B_r(V\mid W)\leq K^B_r(V^\prime) + O(\log r).
\end{equation*}
Applying \eqref{eq:preliminaryPlanesInsidePlanes} completes the proof.
\end{proof}

\section{Enumeration lemma}

The following lemma generalizes Lemma 3.1 of \cite{LutStu18Projections}. The second condition says that if \(w\) has the same projection as \(z\), then either \(w\) is close to \(z\) or \(w\) has high complexity. Consequently, by enumerating all low-complexity points with the same projection as \(z\), one can recover \(z\). The first condition guarantees that the corresponding enumeration Turing machine always halts and returns a valid candidate. In the proof of our main theorem, the goal will be to check the conditions on the lemma so we can apply it and deduce a strong bound on the complexity of a projected point. 

\begin{lem}\label{lem:enumeration}
Suppose that $z \in \mathbb{R}^n$, $V \in \G(n,k)$, $r \in \mathbb{N}$, $\delta \in \mathbb{R}_{+}$, and
$\ve,\eta \in \mathbb{Q}_{+}$ satisfy $r \geq \log\!\bigl(2\|z\|+5\bigr)+10$
and the following conditions.
\begin{enumerate}
  \item $K_r(z) \le (\eta+\ve)r $.
  \item For every $w \in B_2(z)$ such that $p_V w = p_V z$,
  \[
  K_r(w) \ge (\eta-\ve)r + (r-t)\delta,
  \]
  whenever $t = -\log \|z-w\| \in (0,r]$.
\end{enumerate}
Then for every oracle set $A \subseteq \mathbb{N}$,
\[
K_{r}^{A,V}(p_V z) \ge K_{r}^{A,V}(z)
  - \frac{2n\ve}{\delta}\,r - K(\ve) - K(\eta) - O(\log r),
\]
where the constant implied by the big-$O$ bound depends only on $z$, $p_V$, and $n$.
\end{lem}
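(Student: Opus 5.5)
The argument follows Lemma 3.1 of \cite{LutStu18Projections}. We build a Turing machine $M$ with oracle $(A,V)$. Its input is a short program for a rational approximation $q$ of $p_V z$ at precision $r$, together with the rationals $\ve,\eta$ and $r$, encoded in $K(\ve)+K(\eta)+O(\log r)$ extra bits. $M$ dovetails over all programs $\pi$ with $\ell(\pi)\le(\eta+\ve)r$, running $U(\pi)$. Whenever some $U(\pi)$ halts with output a rational point $p\in\Q^n$, $M$ uses the oracle for $V$ to compute $p_Vp$ to sufficient precision and tests whether $|p_V p - q|$ is at most roughly $2^{-r+2}$. It outputs the first $p$ that passes. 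Condition (1) guarantees that $M$ halts: a shortest program for a $2^{-r}$-approximation $p^*$ of $z$ has length at most $(\eta+\ve)r$, and $|p_V p^* - q|\le 2^{1-r}$ because $p_V$ is $1$-Lipschitz. So $M$ outputs some $p$ with $K(p)\le(\eta+\ve)r$ and $p_V p$ within $O(2^{-r})$ of $p_V z$.

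Next we show that the output $p$ is close to $z$. Let $w = p + (p_V z - p_V p)$, i.e.\ we translate $p$ by a vector in $V$ of length $O(2^{-r})$. Then $p_V w = p_V z$ (using $p_V$ on $V$ is the identity, assuming the projection is onto $V$ itself), $|w-p|=O(2^{-r})$, and $K_r(w)\le K(p)+O(\log r)\le(\eta+\ve)r+O(\log r)$. We also need $w\in B_2(z)$. I expect to get this by arguing that points far from $z$ can be ignored, or by restricting the search to $p$ with $|p-q'|\le1$ for a coarse approximation $q'$ of $z$ supplied with $O(\log\|z\|)$ bits; this is where the hypothesis $r\ge\log(2\|z\|+5)+10$ and the dependence of the constant on $z$ enter. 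Set $t=-\log|z-w|$. If $t\ge r$, then $p$ is already $O(2^{-r})$-close to $z$. Otherwise condition (2) gives $(\eta-\ve)r+(r-t)\delta\le K_r(w)\le(\eta+\ve)r+O(\log r)$, so $r-t\le (2\ve r+O(\log r))/\delta$. Hence $|p-z|\le 2^{-t}+O(2^{-r})$ with $t\ge r-2\ve r/\delta-O(\log r)$.

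Finally we recover $z$ at precision $r$. From $p$ we know $z$ to precision $t$, and by Lemma \ref{lem:CaseLutz} (relativized to $(A,V)$), refining from precision $t$ to precision $r$ costs at most $n(r-t)+O(\log r)\le \frac{2n\ve}{\delta}r+O(\log r)$ bits. Combining the pieces,
\begin{equation*}
K_r^{A,V}(z)\le K_r^{A,V}(p_V z)+K(\ve)+K(\eta)+\frac{2n\ve}{\delta}r+O(\log r),
\end{equation*}
which is the claimed bound.

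The main obstacle is making this last step rigorous. The inequality $K_t(z)\le K(p)+O(\log r)$ together with Case--Lutz needs care, because we want the refinement to depend on $z$ itself, not on $p$. I would bound $K_r^{A,V}(z)\le K_r^{A,V}(z\mid p)+K^{A,V}(p)$ and use that $|z-p|\le 2^{-t+1}$, so $z$ lies in a ball of radius $2^{-t+1}$ around $p$. Such a ball is covered by $2^{n(r-t)+O(1)}$ dyadic cubes at scale $2^{-r}$, so specifying $z$ given $p$ costs $n(r-t)+O(\log r)$ bits. The other delicate points are the precision bookkeeping in the approximate projection test and the $B_2(z)$ membership; both contribute only additive constants absorbed into $O(\log r)$, with constants depending on $z$, $p_V$ and $n$.
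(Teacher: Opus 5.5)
Your proposal is correct and is essentially the paper's proof. The paper uses the same machine enumerating short programs whose outputs project near $p_Vz$, and the same auxiliary point $w$ with $p_Vw=p_Vz$ (your explicit $w=p+(p_Vz-p_Vp)$ is the content of Observation~\ref{obs:enumeration}). It uses condition (2) to force $r-t\le\frac{2\ve}{\delta}r+O(\log r)$, followed by an $n(r-t)$-bit refinement. The paper phrases that refinement as $K_r^{A,V}(p_Vz)\gtrsim K_r^{A,V}(w)\ge K_r^{A,V}(z)-n(r-t)-O(\log r)$ rather than your covering bound on $K_r^{A,V}(z\mid p)$.

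The paper handles $w\in B_2(z)$ by your second option: it restricts to outputs $p\in B_{1/2}(\tilde z)$ for a $2^{-10}$-approximation $\tilde z$ of $z$, at a cost of $K_{10}(z)$ bits. Radius $1/2$ rather than your $1$ is also what guarantees $t>0$, so that condition (2) actually applies.
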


We will need the following observation of Lutz and Stull. 
\begin{obs}[\cite{LutStu18Projections}]\label{obs:enumeration}
    Let $z \in \mathbb{R}^n$, $p \in \mathbb{Q}^n$, $V \in \G(n,k)$, and $r \in \mathbb{N}$ such that $\| p_V z - p_V p\| \le 2^{-r}$. Then there is a $w \in \mathbb{R}^n$ such that $\|p-w\| \le 2^{-r}$ and $p_Vz =p_V w$.
\end{obs}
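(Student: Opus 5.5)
The plan is to write down the point $w$ explicitly. We translate $p$ by the vector that corrects its projection, and this translation is taken inside $V$, so the perpendicular component of $p$ is left unchanged. Throughout, we identify $p_V$ with the orthogonal projection matrix associated to $V$, as in Section~2, so that $p_V z$ and $p_V p$ are vectors of $\R^n$ lying in $V$.

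Concretely, we set
\begin{equation*}
    w := p + (p_V z - p_V p).
\end{equation*}
The first step is to check that $p_V w = p_V z$. By linearity,
\begin{equation*}
    p_V w = p_V p + p_V(p_V z) - p_V(p_V p).
\end{equation*}
Since $p_V$ is an orthogonal projection matrix, it is idempotent, i.e.\ $p_V p_V = p_V$. Hence $p_V(p_V z) = p_V z$ and $p_V(p_V p) = p_V p$. The right-hand side therefore collapses to $p_V z$.

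The second step is the distance bound. We have
\begin{equation*}
    \|p - w\| = \|p_V z - p_V p\| \le 2^{-r}
\end{equation*}
directly from the hypothesis. This completes the argument.

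There is no genuine obstacle here. The only point needing care is to work with $p_V$ as a map $\R^n \to V \subseteq \R^n$, and not as a map to coordinates in $\R^k$. With this convention the correction vector $p_V z - p_V p$ lives in $\R^n$, and idempotence applies.

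If one instead views the projection as landing in $\R^k$ via an orthonormal basis of $V$, the same construction works. In that case we lift the difference back to $\R^n$ using the adjoint of the coordinate map, which is an isometry onto $V$. This does not change the norm, so the distance bound still holds.

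Note that $w$ need not be rational. The statement does not require this: the observation only needs $w$ to be a real point in the fiber through $z$ that lies near the rational approximation $p$.
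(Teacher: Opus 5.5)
Your proof is correct. It is essentially the argument the paper defers to when it says the proof is identical to the line case of Lutz and Stull: shift $p$ inside $V$ by the discrepancy of the projections, so that $w = p + (p_V z - p_V p)$ satisfies $p_V w = p_V z$ by idempotence and $\|p - w\| = \|p_V z - p_V p\| \le 2^{-r}$.
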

\noindent Our formulation is a slight generalization of theirs, replacing projections onto lines by projections onto $k$-planes, but the proof is identical. 

More generally, the proof of this enumeration lemma is very similar to \cite{LutStu18Projections}. However, we include it for the purpose of completeness. 

\begin{proof}[Proof of Lemma \ref{lem:enumeration}]
    We design an oracle Turing Machine $M^{A,V}(\widetilde{p_Vz},\tilde{z} ,\tilde{r},\ve,\eta)$, where $\eta,\ve \in \Q$, $\widetilde{p_Vz}\in \Q^n \cap B_{2^{-r}}(p_V z)$, $\tilde{z}  \in \Q^n \cap B_{2^{-10}}(z)$ and $\tilde{r} = r-10 \in \N$, as follows: \begin{itemize}
        \item For every program $\sigma \in \{0,1\}^*$ with $\ell(\sigma) \leq (\eta + \ve)(r-10)$, in parallel, $M$ simulates $U(\sigma)$.
        \item If one of the simulations halts with output $p \in \Q^n\cap B_{2^{-1}}(\tilde{z} )$ such that $\|p_Vp - \widetilde{p_Vz}\|\leq 2^{-(r-10)}$, then $M^{A,p_V}$ halts with output $p$. 
    \end{itemize}
    Note that by assumption (i), there is some $\sigma$ such that $U(\sigma) = \bar{z} \in \Q^n$ where $\|\Bar{z} - z \| \leq 2^{-r}$. Then \begin{equation*}
        \|\Bar{z} - \tilde{z}\| \leq \|\Bar{z} - z \| + \|z - \tilde{z}\| \leq 2^{-r} + 2^{-10} \leq 2^{-1},
    \end{equation*} which implies $\bar{z} \in \Q^n \cap B_{2^{-1}}(\tilde{z})$. Since the function $x \mapsto p_V x$ is $1$-Lipschitz, we have \begin{equation*}
    \|p_V \bar{z} - \widetilde{p_V z}\| \leq \|p_V \bar{z} - p_V z\| +\|p_V z - \widetilde{p_V z}\| \leq 2^{-r} + 2^{-r} \leq 2^{-(r-10)}
    \end{equation*} 
    So $M^{A,V}$ is guaranteed to halt on our input. As for the output $p$, we have \begin{equation*}
        \|p_V p - p_V z\| \leq \|p_V p - \widetilde{p_V z}\| + \|\widetilde{p_Vz} - p_V z\| \leq 2^{-(r-10)} + 2^{-r}\leq 2^{-r + 1}
    \end{equation*}
     So we know, by Observation \ref{obs:enumeration}, there is some \begin{equation*}
         w \in B_{2^{-r+1}}(p) \subset B_{2^{-1}}(p)\subset B_{2^0}(\tilde{z})\subset B_{2^{1}}(z)
     \end{equation*} such that $p_V w = p_V z$. Therefore, by Lemma \ref{lem:CaseLutz}, we have \begin{align*}
         K^{A,V}_r(w) &\leq K_r^{A,V}(p_V z) + K_{10}(z) + K(r) + K(\ve) + K(\eta) + O(\log r)\\
        & \leq K_r^{A,V}(p_V z)  + K(\ve) + K(\eta) + O(\log r).
     \end{align*}
     After rearranging, we get \begin{equation}\label{eq:p_Vz-lowerBound}
         K_r^{A,V}(p_V z) \geq K_r^{A,V}(w) - K(\ve) - K(\eta) - O(\log r).
     \end{equation}
     Let $t = -\log \|z - w\|$. If $t \geq r$, then the proof is already complete. If $t < r$, $B_{2^{-r + 1}}(p) \subset B_{2^{-t + 2}}(z)$; together with Lemma \ref{lem:CaseLutz}, we have\begin{equation}\label{eq:w-lowerBound}
         K_r^{A,V}(w)\geq K_{r }^{A,V}(z) - n(r - t) - O(\log r),
     \end{equation}
     and by construction, we have \begin{align*}
         (\eta + \ve)r \geq K(p)\geq K_r(w) - O(\log r).
     \end{align*}
     By assumption (ii), we then have \begin{equation*}
         (\eta + \ve)r \geq (\eta - \ve)r + (r- t)\delta
     \end{equation*}
     which then implies\begin{equation*}
         r -t \leq \frac{2\ve}{\delta}r + O(\log r)
     \end{equation*}
     Combining this with the inequalities \eqref{eq:p_Vz-lowerBound} and \eqref{eq:w-lowerBound} completes the proof.
\end{proof}

\section{Geometric lemma}
The following lemma generalizes Lemma 3.3 of \cite{LutStu18Projections}.  The intuition is that two points with an identical projection onto a subspace will either be very close together, or give a significant amount of information about that subspace. As compared to the original lemma in \cite{LutStu18Projections}, this proof will require several distinct geometric tools, which we have introduced in previous sections. 
\begin{lem}\label{lem:geometric}
    Let $z,w \in \R^n$, $B\subseteq\mathbb{N}$, and $V \in \G(n,k)$ be such that $p_V z = p_V w$ and $z$ agrees with $w$ up to precision $t$. Then, 
\begin{equation*}
    K^B_r(w)\geq K^B_t(z) + K^B_{r-t, r}(V\mid z) - k(n-1-k)(r-t) -O(\log r)
\end{equation*}
\end{lem}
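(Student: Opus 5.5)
The plan is to follow the proof of Lemma 3.3 in \cite{LutStu18Projections}, replacing the single ``direction'' step by a two-stage description of $V$. There are two inequalities to establish.
\begin{enumerate}[label=(\alph*)]
\item A \emph{lower bound} for $K^B_r(w)$ in terms of $K^B_t(z)$ and the conditional complexity $K^B_{r,r}(w\mid z)$.
\item An \emph{upper bound} for $K^B_{r-t,r}(V\mid z)$ in terms of $K^B_{r,r}(w\mid z)$ plus a term of size $k(n-1-k)(r-t)$.
\end{enumerate}
Chaining them gives the statement.

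For (a), apply Proposition~\ref{prop:symmetry} to the pair $(w,w)$ at precisions $r\geq t$:
\[
K^B_r(w)\geq K^B_t(w)+K^B_{r,t}(w\mid w)-O(\log r).
\]
Since $z$ and $w$ agree up to precision $t$, a $2^{-t}$-approximation of one is a $2^{-t+1}$-approximation of the other. Hence $K^B_t(w)\geq K^B_t(z)-O(\log r)$ and $K^B_{r,t}(w\mid w)\geq K^B_{r,t}(w\mid z)-O(\log r)$. Conditioning on $z$ at the higher precision $r$ can only help, so $K^B_{r,t}(w\mid z)\geq K^B_{r,r}(w\mid z)-O(\log r)$. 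Together these give
\[
K^B_r(w)\geq K^B_t(z)+K^B_{r,r}(w\mid z)-O(\log r).
\]

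For (b), assume $z\neq w$ (if $z=w$, take $t=r$ and the claim is trivial). Set $u=(z-w)/\|z-w\|$. Because $p_Vz=p_Vw$, we have $u\in V^\perp$, equivalently $V\subseteq u^\perp$, where $u^\perp\in\G(n,n-1)$.

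First, a Turing machine given $2^{-r}$-approximations of $z$ and $w$ computes $u$ to precision roughly $r-t$. Dividing by $\|z-w\|\approx 2^{-t}$ costs $t$ bits of precision, plus $O(1)$ from the ``agrees to precision $t$'' hypothesis, and we may assume $t$ is essentially $-\log\|z-w\|$. Thus
\[
K^B_{r-t,r}(u\mid z)\leq K^B_{r,r}(w\mid z)+O(\log r),
\]
after absorbing a constant loss of precision via Lemma~\ref{lem:CaseLutz}. By Lemma~\ref{lem:planeDeterminesComplement}, applied to the line spanned by $u$, we also get $u^\perp$ at precision $r-t$ with only $O(\log r)$ further bits.

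Second, since $V\subseteq u^\perp$ with $k<n-1$, Lemma~\ref{lem:planesInsidePlanes} with $k_1=k$ and $k_2=n-1$, run at precision $r-t$, gives
\[
K^B_{r-t}(V\mid u^\perp)\leq k(n-1-k)(r-t)+O(\log r).
\]
If $k=n-1$, then $V=u^\perp$ and this term vanishes. Combining the two stages with the symmetry of information, Proposition~\ref{prop:symmetry}, yields
\[
K^B_{r-t,r}(V\mid z)\leq K^B_{r,r}(w\mid z)+k(n-1-k)(r-t)+O(\log r).
\]
Substituting this into (a) proves the lemma.

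The main obstacle I expect is the bookkeeping of precisions in the conditional complexities. Lemma~\ref{lem:planesInsidePlanes} is stated with $V$ and $W$ at the same precision $r$. Here it is used relative to $u^\perp$, which is only known to precision $r-t$, and then transported into a conditional term whose conditioning object $z$ sits at precision $r$. I would handle this by making the machine explicit. From $\tilde z,\tilde w$ it computes $\tilde u$ and then $\widetilde{u^\perp}$, and it reads the $k(n-1-k)(r-t)$ bits describing $V$ inside $u^\perp$ as a separate program component; prefix-freeness costs only $O(\log r)$. A secondary point is quantitative stability. The loss from dividing by $\|z-w\|$ must be an additive constant in precision, and the singular-value constants in Lemma~\ref{lem:planesInsidePlanes} depend only on $n$ and $k$. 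Both follow from Observation~\ref{obs:singularValueLowerBound} and Lemma~\ref{lem:CaseLutz}.
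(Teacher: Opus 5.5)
Your proposal is correct and follows essentially the same route as the paper: symmetry of information reduces the statement to bounding $K^B_{r,r}(w\mid z)$, and you then recover the hyperplane $(z-w)^\perp \supseteq V$ at precision $r-t$ and pay $k(n-1-k)(r-t)$ bits via Lemma~\ref{lem:planesInsidePlanes}. The only cosmetic difference is that you compute the unit direction $u$ directly from $\tilde z,\tilde w$, whereas the paper first recovers the line through $z$ and $w$ via Lemma~\ref{lem:pointsDeterminePlanes} and reads off its direction; your separate handling of $k=n-1$ and your explicit assumption $t\approx -\log\|z-w\|$ spell out points the paper leaves implicit.
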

\begin{proof}
    Since $z$ and $w$ agree up to precision $t$,
\begin{align*}
    K^B_r(w)&= K^B_{r, t}(w\mid w) + K^B_t(w)\pm O(\log r)\\
    &= K^B_{r, t}(w\mid z) + K^B_t(z) \pm O(\log r)\\
    &\geq K^B_{r}(w\mid z) + K^B_t(z) -O(\log r)
\end{align*}
    So it suffices to show that
    \begin{equation}\label{eq:mainGeoLemma}
        K^B_r(w\mid z)\geq K^B_{r-t, r}(V\mid z) - k(n-1-k)(r-t) -O(\log r)
    \end{equation}

Using Lemma \ref{lem:pointsDeterminePlanes}, it is easy to see that
\begin{equation*}
    K^B_r(w\mid z)\geq K^B_{r-t, r}(\ell\mid z) - O(\log r)
\end{equation*}
where $\ell$ is the line through $w$ and $z$. Reading off the direction of $\ell$ and employing Lemma \ref{lem:planeDeterminesComplement} gives the same bound for $H$, the hyperplane onto which $z$ and $w$ have the same projection. In particular,
\begin{equation}\label{eq:HfromZ}
    K^B_r(w\mid z)\geq K^B_{r-t, r}(H\mid z) - O(\log r)
\end{equation}
By assumption, $V\subseteq H$, so by Lemma \ref{lem:planesInsidePlanes},
\begin{equation}\label{eq:VinH}
    K^B_{r-t}(V\mid H)\leq k(n-1-k)(r-t)+O(\log r).
\end{equation}

At worst, one could compute $V$ from $z$ by passing through $H$, so
\begin{equation*}
K^B_{r-t, r}(V\mid z) \leq K^B_{r-t, r}(H\mid z) + K^B_{r-t}(V\mid H) + O(\log r).
\end{equation*}
Rearranging and using \eqref{eq:HfromZ} and \eqref{eq:VinH} gives
\begin{equation*}
K^B_r(w\mid z)\geq K^B_{r-t, r}(V\mid z) - k(n-1-k)(r-t) -O(\log r),
\end{equation*}
which is exactly \eqref{eq:mainGeoLemma}
\end{proof}

\section{Proof of main theorem}
With the tools developed in the previous sections, we are now ready to prove the main theorems. We begin with the following theorem, which gives a strong quantitative lower bound on the dimension of a projection under a suitable assumption on the complexity of the \(k\)-plane. From this theorem, we will be able to deduce the desired exceptional set estimate.

\begin{thm}\label{thm:1}
    Let $F \subseteq \R^n$ with optimal oracle $A$. Let $V \in \G(n,k)$. If $F$ has Hausdorff dimension $a$, and $\dim^A(V) \geq b$ for some $b > k(n-1-k)$, then \begin{equation*}
        \dim_H(p_VF) \geq \min\{a, b - k(n-1-k)\}
    \end{equation*}
\end{thm}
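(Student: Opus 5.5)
We follow the strategy of Lutz and Stull, adapted to $k$-planes. By the point-to-set principle it suffices to show the following. For every oracle $B$ (say a Hausdorff oracle for $p_VF$) and every $\ve>0$, there is a point $z\in F$ with
\[
\dim^{A,B,V}(p_V z)\ \ge\ \min\{a,\, b-k(n-1-k)\}-O(\ve).
\]
Here adding $A$ and $V$ to a Hausdorff oracle for $p_VF$ only lowers complexities, so such an oracle still witnesses $\dim_H(p_VF)$.

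\textbf{Choice of the point.} Since $A$ is optimal for $F$, we apply the definition with the oracle $(B,V)$. This yields $z\in F$ with $\dim^{A,B,V}(z)\ge a-\ve$ and $K_r^{A,B,V}(z)\ge K_r^A(z)-\ve r$ for all large $r$. We also have $\dim^A(z)\le a$, since $A$ is a Hausdorff oracle. Set $\eta\in\Q$ slightly below $\min\{a, b-k(n-1-k)\}$ (and at most $\dim^A(z)$), fix a large precision $r$, and let $D=D(r,z,\eta)$ from Lemma~\ref{lem:oracleD}, relativized to $A$. Then $K_r^{A,D}(z)=\eta r+O(\log r)$, and $K_t^{A,D}(z)=\min\{\eta r,K_t^A(z)\}$ for $t\le r$. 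By Lemma~\ref{lem:oracleD2}, $K_r^{A,B,V,D}(z)\ge \eta r-\ve r-O(\log r)$.

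\textbf{Verifying the hypotheses of Lemma~\ref{lem:enumeration}.} We work relative to $(A,D)$ and apply Lemma~\ref{lem:enumeration}. Condition (1) is immediate from the choice of $D$. For condition (2), take $w\in B_2(z)$ with $p_Vw=p_Vz$ and $t=-\log\|z-w\|\le r$. By Lemma~\ref{lem:geometric} with oracle $(A,D)$,
\[
K_r^{A,D}(w)\ge K_t^{A,D}(z)+K^{A,D}_{r-t,r}(V\mid z)-k(n-1-k)(r-t)-O(\log r).
\]
We bound the middle term using Lemma~\ref{lem:oracleD}(ii), Lemma~\ref{lem:conditionalAndOracle} and symmetry of information. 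This gives $K_{r-t,r}(V\mid z)\ge K^{A,z}_{r-t}(V)-O(\log r)$, and we want this to be at least $b(r-t)-\ve r$. This requires $V$ to be essentially independent of $z$ relative to $A$. We get this from optimality: choosing $z$ with $K^{A,V}_r(z)\ge K^A_r(z)-\ve r$, symmetry of information (Proposition~\ref{prop:symmetry}) gives $K^{A,z}_s(V)\ge K^A_s(V)-\ve r\ge (b-\ve)s-\ve r$.

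It then remains to handle the term $K_t^{A,D}(z)$, splitting into two cases. If $K_t^{A}(z)\ge\eta r$, the bound is at least $\eta r$, which suffices. Otherwise, we use $K^A_r(z)\le K^A_t(z)+n(r-t)$ (Lemma~\ref{lem:CaseLutz}) together with $K_t^A(z)\ge (\eta-\ve) t$ for large $t$. In either case we obtain
\[
K_r^{A,D}(w)\ge (\eta-O(\ve))r+(r-t)\delta,\qquad \delta=b-k(n-1-k)-\eta>0.
\]

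\textbf{Conclusion and main obstacle.} Lemma~\ref{lem:enumeration}, with oracle $(B,V)$ added and everything relativized to $(A,D)$, then gives
\[
K_r^{A,B,V,D}(p_Vz)\ \ge\ K_r^{A,B,V,D}(z)-\tfrac{2n\ve}{\delta}r-O(\log r)\ \ge\ \eta r-O(\ve/\delta)r.
\]
Removing $D$ only increases complexity, by Lemma~\ref{lem:conditionalAndOracle}. Taking the $\liminf$ and letting $\ve\to0$, $\eta\uparrow\min\{a,b-k(n-1-k)\}$ finishes the proof.

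The delicate step is the case analysis for condition (2). The worry is that small $t$ leaves the term $K^A_t(z)$ weak, so the precise bookkeeping of $\eta$ against the lower bound on $\dim^A(z)$ and the $\ve r$ losses must be checked carefully. A secondary difficulty is ensuring that one optimal point $z$ simultaneously serves both the oracle $B$ and the independence from $V$; we handle this by applying optimality once, to the joined oracle $(B,V)$.
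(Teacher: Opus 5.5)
Your overall architecture matches the paper's: point-to-set principle, optimality applied to the joined oracle $(B,V)$, the oracle $D$, Lemma~\ref{lem:geometric} for condition (ii), then Lemma~\ref{lem:enumeration} and Lemma~\ref{lem:oracleD2}. The step that fails is the bound on the middle term $K^{A,D}_{r-t,r}(V\mid z)$. You first weaken it to the oracle complexity $K^{A,z}_{r-t}(V)$. You then assert that Proposition~\ref{prop:symmetry} together with $K^{A,V}_r(z)\ge K^A_r(z)-\ve r$ gives $K^{A,z}_s(V)\ge K^A_s(V)-\ve r$.

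Symmetry of information only relates conditional complexities at fixed finite precisions, so it cannot move $z$ into the oracle. The inequality itself can also fail. An oracle for $z$ reads $z$ at every precision, whereas optimality only caps the loss at $\ve r'$ at each precision $r'$. Nothing in the definition of an optimal oracle stops the supplied $z$ from carrying, just after each dyadic position $r'$, an approximation of $V$ whose information content is a small constant multiple of $\ve r'$. Both defining conditions can still hold for such a $z$. Yet $K^{A,z}_s(V)=O(\log s)$ for every $s$, which contradicts your bound when $t$ is small, so that $s=r-t$ is close to $r$. Your chain "conditional $\ge$ oracle $\ge K^A_s(V)-\ve r$" therefore breaks at its second inequality.

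The repair is to stay with conditional complexity at precision $r$, which is what the paper does:
\[
K^A_{r-t,r}(V\mid z)=K^A_{r,r-t}(z\mid V)+K^A_{r-t}(V)-K^A_r(z)\pm O(\log r).
\]
Then bound
\[
K^A_{r,r-t}(z\mid V)\ge K^{A,V}_r(z)-O(\log r)\ge K^{A,B,V}_r(z)-O(\log r)\ge K^A_r(z)-\ve r-O(\log r)
\]
using Lemma~\ref{lem:conditionalAndOracle} and optimality. Passing to an oracle is harmless here because the object placed in the oracle is $V$, and that is exactly what optimality controls. Combined with Lemma~\ref{lem:oracleD}(ii), this gives $K^{A,D}_{r-t,r}(V\mid z)\ge K^A_{r-t}(V)-\ve r-O(\log r)$, and the rest of your argument then goes through.

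The case split you flag as delicate is not needed. Since $\eta\le\dim^A(z)$, you have $K^{A,D}_t(z)=\min\{\eta r,K^A_t(z)\}+O(\log r)\ge \eta t-o(r)$ uniformly in $t\le r$, so Lemma~\ref{lem:CaseLutz} plays no role.
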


The proof proceeds as follows. First, we choose a point \(z \in F\) of high complexity relative to all of the objects in the problem.  We then use Lemma \ref{lem:oracleD} to control the complexity of \(z\), which yields the first condition of Lemma \ref{lem:enumeration}. Next, we apply Lemma \ref{lem:geometric} to obtain the second condition of Lemma \ref{lem:enumeration}.  Finally, applying Lemma \ref{lem:enumeration}, we conclude that \(p_V z\) also has high complexity.

\begin{proof}
    Fix an arbitrary oracle $B \subseteq \N$. By the point-to-set principle, it suffices to show that \begin{equation}\label{eq:thm1LowerBound}
        \sup_{z \in F} \dim^B(p_V z) \geq \min\{a,b - k(n-1-k) \}.
    \end{equation}
    Fix $\ve > 0$. By the definition of optimal oracles, we know that there exists $ z \in F$ such that $\dim^{A,B,V} (z)\geq a  - \frac{\ve}{2}$ and for all sufficiently large $r \in \N$, \begin{equation}\label{eq:zOptimalOracle}
        K_r^{A,B,V} (z) \geq K_r^A(z) -\frac{\ve}{2} r.
    \end{equation}
    We will check the conditions of Lemma \ref{lem:enumeration}, relative to an oracle. Let $\eta \in \Q\cap (0, \min \{\dim^A(z), b - k(n-1-k)\}- \sqrt{\ve})$ and let $D$ be the oracle of Lemma \ref{lem:oracleD} with respect to this $\eta$ and $A$. By property (i) of Lemma \ref{lem:oracleD}, \begin{equation*}
        K_r^{A,D}(z) \leq \eta r + O(\log r)
    \end{equation*}
    Then for $r$ sufficiently large, we have \begin{equation*}
        K_r^{A,D}(z) \leq (\eta  + \ve)r
    \end{equation*}
    which is exactly the condition (i) of Lemma \ref{lem:enumeration}. To obtain condition (ii) of Lemma \ref{lem:enumeration}, we apply Lemma \ref{lem:geometric}. Let $w \in B_2(z)$ such that $p_Vz = p_Vw$ and $\|z-w\| \geq 2^{-r}$. Let $t = - \log \|z-w\|$. By Lemma \ref{lem:geometric} relative to $(A,D)$,  we have \begin{equation}\label{eq:wLowerBound}
        K_r^{A,D}(w) \geq K_t^{A,D}(z) + K_{r-t,r}^{A,D}(V \mid z) - k(n-1-k)(r-t) - O(\log r)
    \end{equation}
    We know that, \begin{align*}
        K_{r,r-t}^{A}(z\mid V) &\geq K_r^{A,V}(z) - O(\log r) && \text{[Lemma \ref{lem:conditionalAndOracle}]}\\
        &\geq K^{A,B,V}_r(z) - O(\log r) && \text{[Lemma \ref{lem:conditionalAndOracle}]}\\
        &\geq K_r^{A}(z) -\frac{ \ve}{2} r - O(\log r)&& \text{[inequality \eqref{eq:zOptimalOracle}]}
    \end{align*}
    Therefore, \begin{align*}
        K_{r-t,r}^{A,D}(V \mid z) 
        &=  K_{r-t,r}^A(V \mid z) \pm O(\log r) && \text{[Lemma \ref{lem:oracleD}]} \\
        &=K_{r,r-t}^{A}(z \mid V) + K_{r-t}^{A}(V) - K_r^{A}(z) \pm O(\log r)&& \text{[Proposition \ref{prop:symmetry}]}\\
        & \geq K_{r-t}^A(V) - \frac{ \ve}{2} r - O(\log r).
    \end{align*} 
    Note that for $r - t \leq \log r$, $K_{r-t}^{A}(V) \geq b(r-t) - o(r)$ trivially; if $r - t > \log r$, by the definition of effective dimension and by requiring $r$ sufficiently large, we also have $K_{r-t}^A(V) \geq b(r - t) - o(r)$. So for all $t\leq r$, we have that \begin{equation}\label{eq:VGivenz}
          K_{r-t,r}^{A,D}(V \mid z)  \geq b(r-t)- \frac{ \ve}{2} r - o(r)
    \end{equation}
    Similarly, by requiring $r$ sufficiently large, for any $ t\leq r$, 
    \begin{equation}\label{eq:zLowerbound}
        K_t^{A,D}(z) \geq \eta t - o(r)
    \end{equation}
   Plugging \eqref{eq:VGivenz} and \eqref{eq:zLowerbound} back into \eqref{eq:wLowerBound},
   \begin{align*}
         K_r^{A,D}(w) &\geq \eta t + b(r-t)- k(n-1-k)(r-t)  - \frac{\ve}{2} r - o(r)\\
         & \geq \eta t+ b(r-t)- k(n-1-k)(r-t)  -\ve r && [\text{$r$ sufficiently large}] \\
         &= (\eta - \ve)r + (r-t )\delta &&[\text{rearranging}]
   \end{align*}
   where $\delta := b - k(n-1-k) - \eta\geq \sqrt{\epsilon} > 0 $. Note that this is exactly condition (ii) of Lemma \ref{lem:enumeration}, relative to $(A,D)$. So we now have that
   \begin{align*}
        K_r^B(p_Vz)&\geq K_r^{A,B,D,V}(p_V z) - O(\log r) && \text{[Lemma \ref{lem:conditionalAndOracle}]}\\
        &\geq K_r^{A,B,D,V}(z) - \ve r - \frac{2n\ve}{\delta}r  - O(\log r)
        &&\text{[Lemma \ref{lem:enumeration}, relative to $(A,D)$]}\\ 
        &\geq K_r^{A,D}(z) - 2\ve r - \frac{2n\ve}{\delta}r  - O(\log r)&&\text{[Lemma \ref{lem:oracleD2}, relative to $A$]}  \\
        &\geq \eta r  - 2\ve r - \frac{2n\ve}{\delta}r - o(r)&& \text{[inequality \eqref{eq:zLowerbound}]}
    \end{align*}
    Then taking the limit inferior, we get \begin{equation*}
        \dim^{B}(p_V z) \geq \eta - 2\ve - \frac{2n\ve}{\delta}
    \end{equation*}

    We now bound the error in this inequality in terms of $\ve$. We were free to pick $\eta$ in a certain interval, and in particular we may assume $\eta \geq \min \{\dim^A(z), b - k(n-1-k)\}-2 \sqrt{\ve}$. Recalling $\delta \geq \sqrt{\ve}$, we have \begin{equation*}
        \dim^B(p_Vz) \geq 
        \min\{\dim^A(z), b - k(n-1-k)\}-2 \sqrt{\ve}
        -2\ve - 2n\sqrt{\ve}
    \end{equation*}
    Letting  $\ve$ approach $0$ gives \eqref{eq:thm1LowerBound}.
\end{proof}

With Theorem \ref{thm:1}, we are  able to prove Theorem \ref{thm:exceptionalSetEstimate}, i.e. a Kaufman type exceptional set estimate for sets with optimal oracles. We restate it here for convenience.
\begin{T2}
      Let $F\subseteq\mathbb{R}^n$ with optimal oracles and $k<n$ be given. Define
    \begin{equation*}
        E_s(F)=\{V\in\mathcal{G}(n, k): \dim_H(p_V F)<s\}.
    \end{equation*}
    If $F$ has Hausdorff dimension $a \geq s$, then
    \begin{equation*}
        \dim_H(E_s(F))\leq k(n-k) + s - k
    \end{equation*}
\end{T2}
\begin{proof}
    Fix an optimal oracle $A$ for $F$. Suppose for the sake of contradiction that $\dim_H(E_s(F)) > k(n-k) + s - k$. Then by the point-to-set principle, we can find $V \in E_s(F)$ such that \begin{equation*}
        \dim ^A(V) \geq k(n-k) + s - k  + \ve
    \end{equation*} with some $\ve > 0$ sufficiently small.
    By Theorem \ref{thm:1}, we have \begin{align*}
        \dim_H(p_VF) &\geq \min\{a, k(n-k) + s- k + \ve - k(n-1-k)\}\\
        & = \min\{a,s + \ve\}
    \end{align*}
    Given $ a\geq s$, we then have \begin{equation*}
        \dim_H(p_VF) \geq s
    \end{equation*}
    which contradicts the fact that $V \in E_s(F)$.
\end{proof}

Now, our generalization of Marstrand's Projection Theorem for sets with optimal oracles follows quickly as a corollary of this estimate. We restate it here for convenience.

\begin{T1}
    Let $F \subseteq \R^n$ with optimal oracles. Then for almost every $V \in \G(n,k)$, \begin{equation*}
        \dim_H(p_VF)= \min\{\dim_H (F), k\}
    \end{equation*}
\end{T1}
\begin{proof}
    The upper bound is trivial; for the lower bound, let $a = \dim_H(F)$. Define
    \begin{equation*}
        E_m = \{V\in \G(n,k): \dim_H(p_VF) < \min\{a,k\} - \frac{1}{m}\}.
    \end{equation*}
    Note that $ \min\{a,k\} - \frac{1}{m} \leq a$, so by Theorem \ref{thm:exceptionalSetEstimate}
    we get that \begin{equation*}
        \dim_H(E_m(F))  \leq k(n-k) + \min\{a,k\} - \frac{1}{m} - k < k(n-k).
    \end{equation*}
    In particular, each of these sets has measure zero. Hence, $E = \bigcup_{m \in \N} E_m$, which is the set of exceptional directions, has measure zero in $\G(n,k)$, which implies for almost every $V \in \G(n,k)$\begin{equation*}
        \dim_H(p_VF) \geq \min\{a,k\}.
    \end{equation*}
 \end{proof}

 \section*{Acknowledgments}
 The authors would like to thank Don Stull for reviewing an earlier draft of this manuscript. 

\bibliographystyle{amsplain}
\bibliography{references}

@article{Kaufman1968ExceptionalSetEstimate,
author = {Kaufman, Robert},
title = {On {H}ausdorff dimension of projections},
journal = {Mathematika},
volume = {15},
number = {2},
pages = {153-155},
doi = {https://doi.org/10.1112/S0025579300002503},
url = {https://londmathsoc.onlinelibrary.wiley.com/doi/abs/10.1112/S0025579300002503},
eprint = {https://londmathsoc.onlinelibrary.wiley.com/doi/pdf/10.1112/S0025579300002503},
year = {1968}
}

@article{Marstrand1954Projection,
author = {Marstrand, J. M.},
title = {Some Fundamental Geometrical Properties of Plane Sets of Fractional Dimensions},
journal = {Proceedings of the London Mathematical Society},
volume = {s3-4},
number = {1},
pages = {257-302},
doi = {https://doi.org/10.1112/plms/s3-4.1.257},
url = {https://londmathsoc.onlinelibrary.wiley.com/doi/abs/10.1112/plms/s3-4.1.257},
eprint = {https://londmathsoc.onlinelibrary.wiley.com/doi/pdf/10.1112/plms/s3-4.1.257},
year = {1954}
}

@book{HornJohnson1985MatrixAnalysis, 
place={Cambridge}, 
title={Matrix Analysis}, 
publisher={Cambridge University Press}, 
author={Horn, Roger A. and Johnson, Charles R.}, 
year={1985}}

@article{case2015dimension,
  title={Mutual dimension},
  author={Case, Adam and Lutz, Jack H.},
  journal={ACM Trans.~Comput.~Theory},
  volume={7},
  number={3},
  pages={1--26},
  year={2015}
}

@article{CholakCsorn2025Bourgain,
 title={Algorithmic Information Bounds for Distances and Orthogonal Projections}, 
      author={Peter Cholak and Marianna Csörnyei and Neil Lutz and Patrick Lutz and Elvira Mayordomo and D. M. Stull},
      year={2025},
      journal={arXiv preprint 2509.05211},
      eprint={2509.05211},
      archivePrefix={arXiv},
      primaryClass={cs.CC},
      url={https://arxiv.org/abs/2509.05211}, 
}

@article{CroFishJack22, 
    title={Hausdorff dimension regularity properties and games},
    volume={248}, 
    url={https://doi.org/10.1007/s11856-022-2299-1},
    journal={Israel Journal of Mathematics},
    author={Crone, Logan and Fishman, Lior and Jackson, Stephen},
    pages={481-500},
    issue={1},
    year={2022},
    month={5}
}

@article{CsornStull2025A,
      title={Improved Bounds for Radial Projections in the Plane}, 
      author={Cs\"ornyei, Marianna and Stull, D. M.},
      journal={arXiv preprint 2508.18228},
      year={2025},
      eprint={2508.18228},
      archivePrefix={arXiv},
      primaryClass={math.CA},
      note={arXiv:2508.18228}, 
}

@book{downey2010,
  title     = "Algorithmic Randomness and Complexity",
  author    = "Downey, Rodney G. and Hirschfeldt, Denis R.",
  year      = 2010,
  publisher = "Springer",
  address   = "New York"
}

@article{Falconer82Projections, title={Hausdorff dimension and the exceptional set of projections}, volume={29}, DOI={10.1112/S0025579300012201}, number={1}, journal={Mathematika}, author={Falconer, K. J.}, year={1982}, pages={109–115}}

@misc{falconer2026seventyyearsfractalprojections,
      title={Seventy Years of Fractal Projections}, 
      author={Kenneth J. Falconer},
      year={2026},
      eprint={2602.22002},
      archivePrefix={arXiv},
      primaryClass={math.MG},
      url={https://arxiv.org/abs/2602.22002}, 
}

@incollection{FalFraJin15,
	author = {Falconer, Kenneth and Fraser, Jonathan and Jin, Xiong},
	booktitle = {Fractal geometry and stochastics V},
	pages = {3--25},
	publisher = {Springer},
	title = {Sixty years of fractal projections},
	year = {2015}}

@article{fiedler2025extensionsUnions,
  title={On the packing dimension of unions and extensions of $k$-planes},
  author={Fiedler, Jacob B.},
  journal={arXiv preprint 2508.18257},
  eprint={2508.18257},
  archivePrefix={arXiv},
  primaryClass={math.CA},
  year={2025},
  url={https://arxiv.org/abs/2508.18257},
     note={arXiv:2508.18257}
}

@article{FieStu24Projections,
      title={Universal sets for projections}, 
      author={Fiedler, Jacob B. and Stull, D. M.},
      journal={arXiv preprint 2411.16001},
      year={2024},
      eprint={2411.16001},
      archivePrefix={arXiv},
      primaryClass={math.CA},
      note={arXiv:2411.16001}, 
}

@article{gan2024exceptionalsets,
    author = {Gan, Shengwen},
    title = {Exceptional Set Estimate Through {B}rascamp–{L}ieb Inequality},
    journal = {International Mathematics Research Notices},
    volume = {2024},
    number = {9},
    pages = {7944-7971},
    year = {2024},
    month = {02},
    issn = {1073-7928},
    doi = {10.1093/imrn/rnae008},
    url = {https://doi.org/10.1093/imrn/rnae008},
    eprint = {https://academic.oup.com/imrn/article-pdf/2024/9/7944/57442834/rnae008.pdf},
}

@book{li2008introduction,
  title={An Introduction to {K}olmogorov Complexity and Its Applications},
  author={Li, Ming and Vit\'{a}nyi, Paul},
  edition={3},
  year={2008},
  city={New York},
  publisher={Springer}
}

@article{LutStu18Projections,
title = {Projection theorems using effective dimension},
journal = {Information and Computation},
volume = {297},
pages = {105137},
year = {2024},
issn = {0890-5401},
doi = {https://doi.org/10.1016/j.ic.2024.105137},
url = {https://www.sciencedirect.com/science/article/pii/S0890540124000026},
author = {Neil Lutz and D. M. Stull}
}

@article{Lutz03b,
	author = {Jack H. Lutz},
	journal = {Inf. Comput.},
	number = {1},
	pages = {49--79},
	title = {The dimensions of individual strings and sequences},
	volume = {187},
	year = {2003}
}

@article{lutz2018algorithmic,
  title={Algorithmic information, plane {K}akeya sets, and conditional dimension},
  author={Lutz, Jack H. and Lutz, Neil},
  journal={ACM Trans.~Comput.~Theory},
  volume={10},
  number={2},
  pages={1--22},
  year={2018}
}

@article{LuLuMay2023PtS,
title = {Extending the reach of the point-to-set principle},
journal = {Information and Computation},
volume = {294},
pages = {105078},
year = {2023},
issn = {0890-5401},
doi = {https://doi.org/10.1016/j.ic.2023.105078},
url = {https://www.sciencedirect.com/science/article/pii/S0890540123000810},
author = {Jack H. Lutz and Neil Lutz and Elvira Mayordomo}
}

@article{lutz2020bounding,
  title={Bounding the dimension of points on a line},
  author={Lutz, Neil and Stull, Donald M.},
  journal={Inform.~and Comput.},
  volume={275},
  pages={104601},
  year={2020}
}

@article{mattila1975exceptionalset, 
    title={Hausdorff dimension, orthogonal projections and intersections with planes}, 
    volume={1}, 
    url={https://afm.journal.fi/article/view/134257}, 
    DOI={10.5186/aasfm.1975.0110}, 
    number={2}, 
    journal={Annales Fennici Mathematici}, 
    author={Mattila, Pertti}, 
    year={1975},
    month={Aug.}, 
    pages={227–244} 
}

@article{Mayordomo02,
	author = {Elvira Mayordomo},
	journal = {Inf. Process. Lett.},
	number = {1},
	pages = {1--3},
	title = {A {K}olmogorov complexity characterization of constructive {H}ausdorff dimension},
	volume = {84},
	year = {2002}}

@article{Orponen2021Projections, title={Combinatorial proofs of two theorems of {L}utz and {S}tull}, volume={171}, DOI={10.1017/S0305004120000328}, number={3}, journal={Mathematical Proceedings of the Cambridge Philosophical Society}, author={Orponen, Tuomas}, year={2021}, pages={503–514}
}

@article{ren2023furstenberg,
  title={{F}urstenberg sets estimate in the plane},
  author={Ren, Kevin and Wang, Hong},
  journal={arXiv preprint 2308.08819},
  eprint={2308.08819},
  archivePrefix={arXiv},
  primaryClass={math.CA},
  year={2023}
}

@inproceedings{Stull22a,
	author = {Stull, D. M.},
	booktitle = {39th International Symposium on Theoretical Aspects of Computer Science (STACS 2022)},
	isbn = {978-3-95977-222-8},
	issn = {1868-8969},
	pages = {57:1--57:17},
	series = {Leibniz International Proceedings in Informatics (LIPIcs)},
	title = {{Optimal Oracles for Point-To-Set Principles}},
	volume = {219},
	year = {2022}}

@article{stull2022pinned,
  title={Pinned distance sets using effective dimension},
  author={Stull, Donald M.},
  journal={arXiv preprint 2207.12501},
  eprint={2207.12501},
  archivePrefix={arXiv},
  primaryClass={cs.CC},
  year={2022}
}
\end{document}